\newtheorem{theorem}{Theorem}
\newtheorem{definition}{Definition}
\newtheorem{assumption}{Assumption}
\newtheorem{remark}{Remark}
\newenvironment{proof}{\paragraph*{Proof}}{\hfill$\square$}
\newcommand{\diag}[1]{\operatorname{diag}\left\{#1\right\}}
\newcommand{\norm}[2]{ \left\Vert {#1} \right\Vert_{#2}}
\newcommand{\normsmall}[2]{\Vert {#1} \Vert_{#2}}
\newcommand{\authornote}[2]{#1 et al. \cite{#2}} 
\newcommand{\settablestretch}{\setstretch{1.1}}
\newcommand{\settablefontsize}{\normalsize}
\newcommand{\colorrowoftable}{\rowcolor{gray!40}}
\newcommand{\proofreference}{Appendix A}
\newcommand{\RNN}{RNN}
\newcommand{\FFNN}{FFNN}
\newcommand{\LSTM}{LSTM}
\newcommand{\GRU}{GRU}
\newcommand{\FC}{FC}
\newcommand{\MPC}{MPC}
\newcommand{\MIMO}{MIMO}
\newcommand{\ISS}{ISS}
\newcommand{\ISSp}[1]{ISS$_{#1}$}
\newcommand{\strictISS}{\ISSp{\infty}}
\newcommand{\deltaISS}{$\delta$ISS}
\newcommand{\deltaISSp}[1]{$\delta$ISS$_{#1}$}
\newcommand{\strictdeltaISS}{\deltaISSp{\infty}}
\newcommand{\HR}{HR} 
\newcommand{\SSR}{SSR} 
\newcommand{\EMR}{EMR} 
\newcommand{\PWM}{PWM} 
\newcommand{\MSE}{MSE}
\newcommand{\margin}{\gamma}
\newcommand{\ISSpenalty}{\rho}
\newcommand{\learnrate}{\eta}
\newcommand{\figurenametext}{\figurename{}} 
\newcommand{\paragraphfont}[1]{\textbf{{#1}.}}
\titlespacing*{\section}{0pt}{3pt}{3pt}
\setlist[itemize]{left=0pt}
\begin{document}

\onecolumn

\title{Infinity-norm-based Input-to-State-Stable Long Short-Term Memory networks: a thermal systems perspective}

\author{%
	\begin{tabular}[t]{@{}c@{}}
		Stefano De Carli\thanks{S. De Carli, D. Previtali, L. Pitturelli, M. Mazzoleni, A. Ferramosca, and F. Previdi are with the Department of\\Management, Information and Production Engineering, University of Bergamo, Via G. Marconi 5, 24044 Dalmine (BG), Italy.\\{\tt\small stefano.decarli@unibg.it}} \\
		Mirko Mazzoleni\footnotemark[1]
	\end{tabular}%
	\hspace{1cm}%
	\begin{tabular}[t]{@{}c@{}}
		Davide Previtali\footnotemark[1] \\
		Antonio Ferramosca\footnotemark[1]
	\end{tabular}%
	\hspace{1cm}%
	\begin{tabular}[t]{@{}c@{}}
		Leandro Pitturelli\footnotemark[1] \\
		Fabio Previdi\footnotemark[1]
	\end{tabular}%
}

\date{}

\twocolumn[
    \maketitle
    \vspace{-25pt}

    \begin{abstract}
        Recurrent Neural Networks (\RNN{}s) have shown remarkable performances in system identification, particularly in nonlinear dynamical systems such as thermal processes. However, stability remains a critical challenge in practical applications: although the underlying process may be intrinsically stable, there may be no guarantee that the resulting \RNN{} model captures this behavior. This paper addresses the stability issue by deriving a sufficient condition for Input-to-State Stability based on the infinity-norm (\strictISS{}) for Long Short-Term Memory (\LSTM{}) networks. The obtained condition depends on fewer network parameters compared to prior works. 
        A \strictISS{}-promoted training strategy is developed, incorporating a penalty term in the loss function that encourages stability and 
        an ad hoc early stopping approach.
        The quality of \LSTM{} models trained via the proposed approach is validated on a thermal system case study, where the \strictISS{}-promoted \LSTM{} outperforms both a physics-based 
        model and an \strictISS{}-promoted Gated Recurrent Unit (\GRU{}) network while also surpassing non-\strictISS{}-promoted \LSTM{} and \GRU{} \RNN{}s.
    \end{abstract}
    \vspace{\baselineskip}
]
\saythanks

\section{Introduction}
\label{s:introduction}
Recurrent Neural Networks (\RNN s) have become widely adopted in the control systems and identification community due to their ability to model dynamical behavior through internal memory mechanisms \cite{schafer_recurrent_2007}. Unlike Feed-Forward Neural Networks (\FFNN{}s), which struggle to capture temporal dependencies, \RNN{} architectures such as Long Short-Term Memory (\LSTM{}) networks \cite{terzi_learning_2021} and Gated Recurrent Unit (\GRU{}) networks \cite{bonassi_stability_2021} effectively address this challenge \cite{ljung_deep_2020}. 
The popularity of \LSTM{}s and \GRU{}s is largely due to their superior performance in system identification tasks, where deep learning architectures can successfully capture complex nonlinearities and can intrinsically handle Multiple-Input Multiple-Output (\MIMO{}) systems \cite{ljung_deep_2020}. Furthermore, these networks can be seamlessly integrated into traditional model-based control strategies, such as Model Predictive Control (\MPC{}), due to their state-space formulation \cite{terzi_learning_2021}.

Thermal systems, characterized by their inherent nonlinearities, dynamical behavior, and \MIMO{} nature, are well-suited for \RNN{} identification. As a matter of fact, \RNN{}s have been successfully applied for modelling the cooling systems of buildings \cite{terzi_learning-based_2020}, heat exchangers \cite{vasickaninova_neural_2011, mohanraj_applications_2015}, photovoltaic-thermal systems \cite{al-waeli_artificial_2019}, and refrigeration and air-conditioning systems \cite{mohanraj_applications_2012}, exhibiting improved performance compared to traditional methods for system identification and control.

Stability is a critical property when deploying \RNN{}s in real-world applications, 
especially when dealing with systems that intrinsically exhibit a stable behavior, such as thermal systems. In this context, the temperature change due to a limited perturbation in the amount of heat supplied to a thermal system is typically bounded. 
Consequently, it can be beneficial to have a model that satisfies certain stability properties, consistently with the plant itself. Given that recurrent neural networks are nonlinear models with external inputs, we rely on the notion of Input-to-State Stability (\ISS{}) \cite{jiang_input--state_2001}, which guarantees that, for any initial condition and any bounded input, the system's states remain bounded over time. Additionally, Incremental Input-to-State Stability (\deltaISS{}) extends this concept by guaranteeing that the difference between two states trajectories asymptotically decreases as the input differences diminish. 
In practice, a system is \ISS{} if we can derive an upper bound on the norm of its state vector that is based on the initial states and the input sequence supplied to it (and similarly for \deltaISS{}). In this work, for the sake of clarity, we explicit the $p$-norm used in the definition of input-to-state stability using the notation \ISSp{p} (respectively, \deltaISSp{p}). The most commonly used stability notions are \ISSp{\infty}/\deltaISSp{\infty} and \ISSp{2}/\deltaISSp{2}, which capture the maximum and average deviation over time respectively \cite{sontag_input--state_1995}. For what concerns \LSTM{} networks, \authornote{Terzi}{terzi_learning_2021} established sufficient parametric conditions for both \ISSp{2} and \deltaISSp{2}. Instead, \authornote{Bonassi}{bonassi_stability_2021} analyzed the \strictISS{} and \strictdeltaISS{} of \GRU{} networks. However, at the moment, there are no works in the literature that tackle directly the input-to-state stability property based on the $\infty$-norm for \LSTM{} networks, which is the purpose of this work.

\paragraphfont{Contributions}
The main contributions in the present paper are: (i) we derive a sufficient condition for \strictISS{} of \LSTM{} networks which, compared to \cite[Proposition 1]{terzi_learning_2021}, depends on fewer network parameters, (ii) we develop a training strategy that promotes the \strictISS{} property, 
and (iii) we employ the \LSTM{} trained according to the proposed approach on a thermal system identification case study, 
comparing it to a grey-box physics-based model and the \strictISS{}-promoted \GRU{} in \cite{bonassi_stability_2021}.
The designed training strategy is also compared to standard \LSTM{}/\GRU{} training (for which \strictISS{} is not promoted in any way) on the same case study.

\paragraphfont{Organization}
This paper is organized as follows. Section \ref{s:problem_statement} introduces the problem statement. Then, Section \ref{s:input_to_state_stable_lstm_networks} presents the \LSTM{} network under study, derives the sufficient parametric condition for \strictISS{}, and describes the proposed \strictISS{}-promoting training strategy. Afterwards, Section \ref{s:thermal_system_under_study} presents the thermal system case study and its respective physics-based model. Next, Section \ref{s:experimental_results} compares the performances of the network models and the physics-based model. Lastly, Section \ref{s:conclusion} gives some final remarks.

\paragraphfont{Notations}
We denote by $\mathbb{R}$ and $\mathbb{N}$ the set of real and natural numbers respectively ($0 \in \mathbb{N}$). Furthermore, $\mathbb{R}_{>0}$ and $\mathbb{R}_{\geq0}$ stand for the set of positive and non-negative real numbers respectively.
Given $n, m \in \mathbb{N}$, $\mathbb{R}^n$ is the set of real column vectors of dimension $n$, while $\mathbb{R}^{n \times m}$ is the set of real matrices of dimension $n \times m$.
Moreover, $\boldsymbol{1}_{n} \in \mathbb{R}^{n}$ is the $n$-dimensional column vector of ones, 
and $\diag{a_1, \ldots, a_n} \in \mathbb{R}^{n \times n}$ is the diagonal matrix with entries $a_1, \ldots, a_n \in \mathbb{R}$ on the main diagonal.
Next, $\circ$ denotes the Hadamard (element-wise) product, while $\norm{\cdot}{p}$ is the $p$-norm of either a matrix or a vector.
Further, $\left|\mathcal{S}\right|$ is the cardinality of the set $\mathcal{S}$.

We denote continuous-time signals  $s : \mathbb{R}_{\geq0} \to \mathbb{R}$ as $s(t)$, where $t \in \mathbb{R}_{\geq0}$ (in $\mathrm{s}$) is the time. Furthermore, $\dot{s}(t)$ is the derivative of $s$ w.r.t. $t$. Instead, $k \in \mathbb{N}$ is the discrete-time step, and $s_k$ is the discrete-time signal resulting from sampling $s(t)$ at a sampling time $T_{\mathrm{s}} \in \mathbb{R}_{> 0}$ (in $\mathrm{s}$), i.e. $s_k = s(k T_{\mathrm{s}}), \forall k \in \mathbb{N}$. The same notations are used for vectors of signals, which are written in a bold font.
Let $\boldsymbol{x}_k = \begin{bmatrix}
	x_{1, k} & \ldots & x_{n, k}
\end{bmatrix}^\top \in \mathbb{R}^n$ be a discrete-time signal with $n \in \mathbb{N}$ components $x_{\iota, k} \in \mathbb{R}, \iota \in \{1, \ldots, n\}$. Then, $\boldsymbol{y}_k = \boldsymbol{\sigma}(\boldsymbol{x}_k)$ and $\boldsymbol{z}_k = \boldsymbol{\tanh}(\boldsymbol{x}_k)$ denote the component-wise application of the sigmoid and hyperbolic tangent functions to $\boldsymbol{x}_k$.
\section{Problem statement}
\label{s:problem_statement}
We consider the problem of modelling and identifying a discrete-time \MIMO{} dynamical system with inputs $\boldsymbol{u}_k \in \mathbb{R}^{n_u}, n_u \in \mathbb{N}$, and outputs $\boldsymbol{y}_k \in \mathbb{R}^{n_y}, n_y \in \mathbb{N}$. Specifically, we are interested in nonlinear state-space models with state vector $\boldsymbol{x}_k \in \mathbb{R}^{n_x}, n_x \in \mathbb{N},$ that are input-to-state stable according to the following Definition.
\begin{definition}[Input-to-state stability (\strictISS{}) \cite{bonassi_stability_2021}]
	\label{def:strict_iss}
	A dynamical system with states $\boldsymbol{x}_k \in \mathcal{X} \subseteq \mathbb{R}^{n_x}$ and inputs $\boldsymbol{u}_k \in \mathcal{U} \subseteq \mathbb{R}^{n_u}$ is said to be input-to-state stable if there exist functions $\beta \in \mathcal{KL}$ and $\gamma_u, \gamma_b \in \mathcal{K}_\infty$ such that, for any $k \in \mathbb{N}$, any initial state $\boldsymbol{x}_0 \in \mathcal{X}$, any input sequence
	$\mathbf{u} = \{\boldsymbol{u}_h \in \mathcal{U}: h \in \{0, \ldots, k-1\}\}$,
	and any bias $\boldsymbol{b} \in \mathbb{R}^{n_x}$, it holds that:
	\begin{equation}
		\label{eq:strict_iss}
		\normsmall{\boldsymbol{x}_k}{\infty} \leq \beta(\normsmall{\boldsymbol{x}_0}{\infty}, k) + \gamma_u \left(\max_{0 \leq h < k} \normsmall{\boldsymbol{u}_h}{\infty} \right) + \gamma_b \left( \normsmall{\boldsymbol{b}}{\infty} \right).
	\end{equation}
\end{definition}
See \cite{jiang_input--state_2001,terzi_learning_2021} for details on $\mathcal{K}_\infty$ and $\mathcal{KL}$ functions.
%
\section{Input-to-state stable \LSTM{} networks}
\label{s:input_to_state_stable_lstm_networks}
An \LSTM{} network \cite{terzi_learning_2021} consists of $L \in \mathbb{N}$ layers, each with $n_{\mathrm{hu}}^{(l)} \in \mathbb{N}, l \in \{1, \ldots, L\},$ hidden units. 
The $l$-th layer of an \LSTM{} amounts to a discrete-time nonlinear dynamical system in state-space form whose state vector $\boldsymbol{x}_k^{(l)} \in \mathbb{R}^{2 n_{\mathrm{hu}}^{(l)}}$ is divided into two components: the cell state $\boldsymbol{c}_k^{(l)} \in \mathbb{R}^{n_{\mathrm{hu}}^{(l)}}$ and the hidden state $\boldsymbol{h}_k^{(l)} \in \mathbb{R}^{n_{\mathrm{hu}}^{(l)}}$, i.e. $\boldsymbol{x}_k^{(l)} = \begin{bmatrix} \boldsymbol{c}_k^{(l)^\top} & \boldsymbol{h}_k^{(l)^\top} \end{bmatrix}^\top$. Several gating mechanisms control the flow of information through the network. Specifically, the forget gate $\boldsymbol{f}_k^{(l)} \in \mathbb{R}^{n_\mathrm{hu}^{(l)}}$, input gate $\boldsymbol{i}_k^{(l)} \in \mathbb{R}^{n_\mathrm{hu}^{(l)}}$, output gate $\boldsymbol{o}_k^{(l)} \in \mathbb{R}^{n_\mathrm{hu}^{(l)}}$, and candidate memory $\boldsymbol{g}_k^{(l)} \in \mathbb{R}^{n_\mathrm{hu}^{(l)}}$ in layer $l$ are computed as:
\begin{subequations}
	\label{eq:gates_LSTM}
	\begin{align}
		\boldsymbol{f}_k^{(l)} &= \boldsymbol{\sigma}\left( W_f^{(l)} \tilde{\boldsymbol{u}}_k^{(l)} + R_f^{(l)} \boldsymbol{h}_k^{(l)} + \boldsymbol{b}_f^{(l)} \right), \\
		\boldsymbol{i}_k^{(l)} &= \boldsymbol{\sigma}\left( W_i^{(l)} \tilde{\boldsymbol{u}}_k^{(l)} + R_i^{(l)} \boldsymbol{h}_k^{(l)} + \boldsymbol{b}_i^{(l)} \right), \\
		\boldsymbol{o}_k^{(l)} &= \boldsymbol{\sigma}\left( W_o^{(l)} \tilde{\boldsymbol{u}}_k^{(l)} + R_o^{(l)} \boldsymbol{h}_k^{(l)} + \boldsymbol{b}_o^{(l)} \right), \\
		\boldsymbol{g}_k^{(l)} &= \boldsymbol{\tanh}\left( W_g^{(l)} \tilde{\boldsymbol{u}}_k^{(l)} + R_g^{(l)} \boldsymbol{h}_k^{(l)} + \boldsymbol{b}_g^{(l)} \right),\\
		\label{eq:input_for_each_layer}
		\tilde{\boldsymbol{u}}_k^{(l)} &= \begin{cases}
			\boldsymbol{u}_k & \mathrm{if}\ l = 1,\\
			\boldsymbol{h}_{k+1}^{(l-1)} & \mathrm{if}\ l \in \{2, \ldots, L\},
		\end{cases}
	\end{align}
\end{subequations}
where $\tilde{\boldsymbol{u}}_k^{(l)}$ is the input for the $l$-th layer, $W_j^{(l)}$ are the input weights and in particular $W_j^{(1)} \in \mathbb{R}^{n_{\mathrm{hu}}^{(1)} \times n_u}$ while $W_j^{(l)} \in \mathbb{R}^{n_{\mathrm{hu}}^{(l)} \times n_{\mathrm{hu}}^{(l-1)}}$ for $l \in \{2, \ldots, L\}$, $R_j^{(l)} \in \mathbb{R}^{n_{\mathrm{hu}}^{(l)} \times n_{\mathrm{hu}}^{(l)}}$ are the recurrent weights, and $\boldsymbol{b}_j^{(l)} \in \mathbb{R}^{n_{\mathrm{hu}}^{(l)}}$ are the biases for $j \in \{f, i, o, g\}$. 
An \LSTM{} network is built via the concatenation of $L$ \LSTM{} layers followed by a Fully Connected (\FC{}) layer that produces the model output. In particular, 
the nonlinear state-space model of the network amounts to \cite{terzi_learning_2021}:
\begin{subequations}
	\label{eq:LSTM_model}
	\begin{numcases}{}
		\label{eq:LSTM_layer_cell_state_update}
		\boldsymbol{c}_{k+1}^{(l)} = \boldsymbol{f}_k^{(l)} \circ \boldsymbol{c}_k^{(l)} + \boldsymbol{i}_k^{(l)} \circ \boldsymbol{g}_k^{(l)} & $\forall l \in \{1, \ldots, L\}$, \\
		\label{eq:LSTM_layer_hidden_state_update}
		\boldsymbol{h}_{k+1}^{(l)} = \boldsymbol{o}_k^{(l)} \circ \boldsymbol{\tanh}\left( \boldsymbol{c}_{k+1}^{(l)} \right) & $\forall l \in \{1, \ldots, L\}$, \\
		\label{eq:LSTM_layer_output_equation}
		\boldsymbol{y}_k = W_y \boldsymbol{h}_{k+1}^{(L)} + \boldsymbol{b}_y, &
	\end{numcases}
\end{subequations}
where \eqref{eq:LSTM_layer_cell_state_update}/\eqref{eq:LSTM_layer_hidden_state_update} are the state update equations for each layer, while $W_y \in \mathbb{R}^{n_y \times n_{\mathrm{hu}}^{(L)}}$ and $\boldsymbol{b}_y \in \mathbb{R}^{n_y}$ are the weight matrix and bias for the \FC{} layer. Overall, the model in \eqref{eq:LSTM_model} has $n_x = 2\sum_{l=1}^{L} n_{\mathrm{hu}}^{(l)}$ states, namely
\begin{equation*}
	\boldsymbol{x}_k = \begin{bmatrix}
		\boldsymbol{c}_{k}^{(1)^\top} & \boldsymbol{h}_{k}^{(1)^\top} & \cdots & \boldsymbol{c}_{k}^{(L)^\top} & \boldsymbol{h}_{k}^{(L)^\top}
	\end{bmatrix}^\top \in \mathbb{R}^{n_x},
\end{equation*}
and relies on a set of parameters
\begin{align}
	\label{eq:LSTM_parameters}
	\theta &= \left\{W_j^{(l)}, R_j^{(l)}, \boldsymbol{b}_j^{(l)}: j \in \{f, i, o, g\}, l \in \{1, \ldots, L\}\right\} \nonumber \\
	&\quad \cup \{W_y, \boldsymbol{b}_y\}
\end{align}
that is learnt during \LSTM{} training (see Section \ref{ss:iss_promoted_lstm_training}). We also remark that $L$ and $ n_{\mathrm{hu}}^{(l)}, l \in \{1, \ldots, L\},$ are hyper-parameters that need to be tuned for network training.

\subsection{Stability analysis}
\label{ss:stability_analysis}
To analyze the stability of an \LSTM{} network, we make the following Assumptions on its inputs and initial states.
\begin{assumption}[Input vector boundedness]
	\label{as:boundedness_of_input}
	The inputs $\boldsymbol{u}_k$ for the model in \eqref{eq:LSTM_model} satisfy:
	\begin{equation}
		\label{eq:input_bound}
		\boldsymbol{u}_k \in \mathcal{U} = \{\boldsymbol{u}: - \boldsymbol{u}_{\max} \leq \boldsymbol{u} \leq \boldsymbol{u}_{\max}\}, \quad \forall k \in \mathbb{N},
	\end{equation}
	where $\boldsymbol{u}_{\max} \in \mathbb{R}^{n_u}$ is an upper bound on $\boldsymbol{u}_k$. 
\end{assumption}
\begin{assumption}[Initial states for the \LSTM{}]
	\label{ass:initial_hidden_states}
	The hidden states 
	for the model in \eqref{eq:LSTM_model} are initialized as follows:
	\begin{equation*}
		\label{eq:initial_hidden_states}
		\boldsymbol{h}_0^{(l)} \in \left(-1, 1\right)^{n_{\mathrm{hu}}^{(l)}}, \quad \forall l \in \{1, \ldots, L\}.
	\end{equation*}
	Instead, we make no assumption on the initialization of the cell states, 
	i.e. $\boldsymbol{c}_0^{(l)} \in \mathbb{R}^{n_{\mathrm{hu}}^{(l)}}$, $\forall l \in \{1, \ldots, L\}$.
\end{assumption}
The following Theorem provides a sufficient condition on the parameters of an \LSTM{} layer that ensures its \strictISS{} as in Definition \ref{def:strict_iss}.
\begin{theorem}[\strictISS{} for an \LSTM{} layer]
	\label{th:lstm_iss}
	The $l$-th \LSTM{} layer in \eqref{eq:LSTM_layer_cell_state_update}/\eqref{eq:LSTM_layer_hidden_state_update}, $l \in \{1, \ldots, L\}$, is \strictISS{} if the following sufficient condition holds:
	\begin{equation}
		\label{eq:lstm_stability_condition}
		\bar{\sigma}_f^{(l)} + \bar{\sigma}_i^{(l)} \normsmall{R_g^{(l)}}{\infty} < 1,
	\end{equation}
	where, for $j \in \{f, i, o\}$, we have:
	\begin{subequations}
		\begin{align}
			\label{eq:sigma_bars}
			&\bar{\sigma}_j^{(l)} = \sigma\left(\norm{\begin{bmatrix} W_j^{(l)} \tilde{\boldsymbol{u}}_{\max}^{(l)} & R_{j}^{(l)} & \boldsymbol{b}_j^{(l)} \end{bmatrix}}{\infty}\right), \\
			\label{eq:bounds_on_the_inputs_of_each_layer}
			&\tilde{\boldsymbol{u}}_{\max}^{(l)} = \begin{cases}
				\boldsymbol{u}_{\max{}} & \mathrm{if} \ l = 1, \\
				\boldsymbol{1}_{n_{\mathrm{hu}}^{(l-1)}} & \mathrm{if} \ l \in \{2, \ldots, L\}.
			\end{cases}
		\end{align}
	\end{subequations}
\end{theorem}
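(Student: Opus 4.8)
The plan is to reduce the stability of the full layer state $\boldsymbol{x}_k^{(l)} = \begin{bmatrix}\boldsymbol{c}_k^{(l)^\top} & \boldsymbol{h}_k^{(l)^\top}\end{bmatrix}^\top$ to a single scalar contraction inequality for the cell state. Concretely, I would establish a one-step bound of the form $\normsmall{\boldsymbol{c}_{k+1}^{(l)}}{\infty} \le \lambda^{(l)} \normsmall{\boldsymbol{c}_k^{(l)}}{\infty} + \mu_k^{(l)}$, where $\lambda^{(l)} = \bar{\sigma}_f^{(l)} + \bar{\sigma}_i^{(l)}\normsmall{R_g^{(l)}}{\infty}$ is exactly the left-hand side of \eqref{eq:lstm_stability_condition} and $\mu_k^{(l)}$ collects the input- and bias-dependent terms. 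Once this holds, unrolling the recursion gives a geometric series, and condition \eqref{eq:lstm_stability_condition} ($\lambda^{(l)}<1$) makes it converge, yielding $\normsmall{\boldsymbol{c}_k^{(l)}}{\infty} \le (\lambda^{(l)})^k \normsmall{\boldsymbol{c}_0^{(l)}}{\infty} + (1-\lambda^{(l)})^{-1}\max_{0 \le h < k} \mu_h^{(l)}$, which is already in the \strictISS{} form of \eqref{eq:strict_iss} with a geometric $\beta \in \mathcal{KL}$ and linear $\gamma_u, \gamma_b \in \mathcal{K}_\infty$. Because the hidden state turns out to be dominated by the cell state (see below), the bound on $\boldsymbol{x}_k^{(l)}$ follows immediately.

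To obtain this recursion I would first collect three elementary bounds. (i) Uniform gate bounds: since $\boldsymbol{\sigma}$ is componentwise monotone, each gate pre-activation can be maximised over the admissible set, using $\boldsymbol{u}_k \in \mathcal{U}$ (Assumption \ref{as:boundedness_of_input}) together with $\tilde{\boldsymbol{u}}_{\max}^{(l)}$ from \eqref{eq:bounds_on_the_inputs_of_each_layer}, the bound $\normsmall{\boldsymbol{h}_k^{(l)}}{\infty} \le 1$, and the triangle inequality for the $\infty$-norm; this gives $\normsmall{\boldsymbol{f}_k^{(l)}}{\infty} \le \bar{\sigma}_f^{(l)}$ and $\normsmall{\boldsymbol{i}_k^{(l)}}{\infty} \le \bar{\sigma}_i^{(l)}$ with $\bar{\sigma}_j^{(l)}$ as in \eqref{eq:sigma_bars}. (ii) Hidden-state bound: from \eqref{eq:LSTM_layer_hidden_state_update}, $\boldsymbol{h}_{k+1}^{(l)} = \boldsymbol{o}_k^{(l)} \circ \boldsymbol{\tanh}(\boldsymbol{c}_{k+1}^{(l)})$ with $\boldsymbol{o}_k^{(l)} \in (0,1)$ componentwise and $|\tanh(x)| \le \min\{1, |x|\}$, so $\normsmall{\boldsymbol{h}_{k+1}^{(l)}}{\infty} < 1$ and, crucially, $\normsmall{\boldsymbol{h}_k^{(l)}}{\infty} \le \normsmall{\boldsymbol{c}_k^{(l)}}{\infty}$ for $k \ge 1$ (for $k=0$ one invokes Assumption \ref{ass:initial_hidden_states} instead). (iii) Candidate bound: again using $|\tanh(x)| \le |x|$ componentwise and the triangle inequality, $\normsmall{\boldsymbol{g}_k^{(l)}}{\infty} \le \normsmall{W_g^{(l)}}{\infty}\normsmall{\tilde{\boldsymbol{u}}_k^{(l)}}{\infty} + \normsmall{R_g^{(l)}}{\infty}\normsmall{\boldsymbol{h}_k^{(l)}}{\infty} + \normsmall{\boldsymbol{b}_g^{(l)}}{\infty}$.

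I would then combine these in the cell update \eqref{eq:LSTM_layer_cell_state_update}. Applying $\normsmall{\boldsymbol{a}\circ\boldsymbol{b}}{\infty} \le \normsmall{\boldsymbol{a}}{\infty}\normsmall{\boldsymbol{b}}{\infty}$ gives $\normsmall{\boldsymbol{c}_{k+1}^{(l)}}{\infty} \le \normsmall{\boldsymbol{f}_k^{(l)}}{\infty}\normsmall{\boldsymbol{c}_k^{(l)}}{\infty} + \normsmall{\boldsymbol{i}_k^{(l)}}{\infty}\normsmall{\boldsymbol{g}_k^{(l)}}{\infty}$; substituting (i) and (iii), and then replacing $\normsmall{\boldsymbol{h}_k^{(l)}}{\infty}$ by $\normsmall{\boldsymbol{c}_k^{(l)}}{\infty}$ via (ii), collects the two terms proportional to $\normsmall{\boldsymbol{c}_k^{(l)}}{\infty}$ into the single coefficient $\lambda^{(l)} = \bar{\sigma}_f^{(l)} + \bar{\sigma}_i^{(l)}\normsmall{R_g^{(l)}}{\infty}$, while the remaining input/bias terms form $\mu_k^{(l)} = \bar{\sigma}_i^{(l)}(\normsmall{W_g^{(l)}}{\infty}\normsmall{\tilde{\boldsymbol{u}}_k^{(l)}}{\infty} + \normsmall{\boldsymbol{b}_g^{(l)}}{\infty})$. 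Unrolling and bounding the geometric sum yields the \strictISS{} estimate; for $l \ge 2$ one closes the argument by noting that $\tilde{\boldsymbol{u}}_k^{(l)} = \boldsymbol{h}_{k+1}^{(l-1)}$ is bounded by $\boldsymbol{1}$ thanks to (ii), consistent with \eqref{eq:bounds_on_the_inputs_of_each_layer}.

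The main obstacle is the role of step (ii) in closing the recursion: the candidate $\boldsymbol{g}_k^{(l)}$ depends on the \emph{hidden} state $\boldsymbol{h}_k^{(l)}$, not on $\boldsymbol{c}_k^{(l)}$ directly, so without the inequality $\normsmall{\boldsymbol{h}_k^{(l)}}{\infty} \le \normsmall{\boldsymbol{c}_k^{(l)}}{\infty}$ one cannot obtain a self-contained scalar recursion in $\normsmall{\boldsymbol{c}_k^{(l)}}{\infty}$, and it is precisely this substitution that produces the $\normsmall{R_g^{(l)}}{\infty}$ factor in \eqref{eq:lstm_stability_condition}. A secondary subtlety is that the linear bound in (iii) discards the saturation $|\tanh(\cdot)| \le 1$; this is a valid, if loose, upper bound, and it is legitimate precisely because condition \eqref{eq:lstm_stability_condition} guarantees $\lambda^{(l)}<1$, so the resulting linear recursion is genuinely contractive and the crude estimate cannot blow up. Care with the initialisation at $k=0$ (Assumption \ref{ass:initial_hidden_states}) and with the uniform worst-case pre-activation used for the gate bounds then completes the argument.
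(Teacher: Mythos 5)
Your proof is correct and takes essentially the same route as the paper's: you use the same gate bounds \eqref{eq:sigma_bars}, the same $1$-Lipschitz estimate on $\boldsymbol{\tanh}$, and the same geometric-series closure under \eqref{eq:lstm_stability_condition}, and your scalar recursion with coefficient $\lambda^{(l)} = \bar{\sigma}_f^{(l)} + \bar{\sigma}_i^{(l)} \normsmall{R_g^{(l)}}{\infty}$, obtained by substituting $\normsmall{\boldsymbol{h}_k^{(l)}}{\infty} \leq \normsmall{\boldsymbol{c}_k^{(l)}}{\infty}$ (valid for $k \geq 1$ since $\bar{\sigma}_o^{(l)} < 1$), is precisely the collapsed form of the paper's two-dimensional recursion \eqref{eq:state_update_matrix}, where the identical mechanism appears as $\norm{\mathbf{A}^{(l)}}{\infty} = \bar{\sigma}_f^{(l)} + \bar{\sigma}_i^{(l)} \normsmall{R_g^{(l)}}{\infty}$ in \eqref{eq:infty_norm_A}. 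One small repair at $k = 0$: bound $\normsmall{\boldsymbol{h}_0^{(l)}}{\infty} \leq \normsmall{\boldsymbol{x}_0^{(l)}}{\infty}$ (the hidden state is a sub-vector of $\boldsymbol{x}_0^{(l)}$) rather than by $1$ via Assumption \ref{ass:initial_hidden_states}, since the crude bound would inject the constant $\bar{\sigma}_i^{(l)} \normsmall{R_g^{(l)}}{\infty}$ into the first step, which fits neither $\beta$ nor $\gamma_u, \gamma_b$ in Definition \ref{def:strict_iss}; with that substitution your unrolled estimate reproduces the paper's $\beta$, $\gamma_u$, and $\gamma_b$ in \eqref{eq:function_beta_ISS}, \eqref{eq:function_gamma_u_ISS}, and \eqref{eq:function_gamma_b_ISS} exactly, because $\normsmall{\mathbf{B}_u^{(l)}}{\infty} = \bar{\sigma}_i^{(l)} \normsmall{W_g^{(l)}}{\infty}$ and $\normsmall{\mathbf{B}_b^{(l)}}{\infty} = \bar{\sigma}_i^{(l)}$ match the terms you collect in $\mu_k^{(l)}$.
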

\begin{proof}
	See \proofreference{}.
\end{proof}
\begin{remark}
	\label{rem:diff_from_original_work}
	Compared to the condition for \ISSp{2} of \LSTM{} layers derived in \cite[Proposition 1]{terzi_learning_2021}, which amounts to:
	\begin{equation}
		\label{eq:lstm_stability_condition_ISS2}
		\bar{\sigma}_f^{(l)} + \bar{\sigma}_o^{(l)} \bar{\sigma}_i^{(l)} \normsmall{R_g^{(l)}}{2} < 1,
	\end{equation}
	where $\bar{\sigma}_j^{(l)}, j \in \{f, i, o\},$ are defined as in \eqref{eq:sigma_bars}, the condition for \strictISS{} obtained in this work, i.e. \eqref{eq:lstm_stability_condition}, depends on fewer network parameters. Specifically, it is not related to the parameters $W_o^{(l)}, R_o^{(l)}$, and $\boldsymbol{b}_o^{(l)}$ that are needed for the computation of $\bar{\sigma}_o^{(l)}$ in \eqref{eq:lstm_stability_condition_ISS2}. \proofreference{} reports, in detail, the main differences in the proof for \ISSp{\infty} of an \LSTM{} layer compared to \cite[Proof of Theorem 1]{terzi_learning_2021} (i.e. \ISSp{2}), which lead to \eqref{eq:lstm_stability_condition} instead of \eqref{eq:lstm_stability_condition_ISS2}. 
	In any case, since $\frac{1}{\sqrt{2 n_{\mathrm{hu}}^{(l)}}} \normsmall{\boldsymbol{x}_k^{(l)}}{2} \leq \normsmall{\boldsymbol{x}_k^{(l)}}{\infty} \leq \normsmall{\boldsymbol{x}_k^{(l)}}{2}$ 
	for any $\boldsymbol{x}_k^{(l)} \in \mathbb{R}^{2 n_{\mathrm{hu}}}$, if the condition in \eqref{eq:lstm_stability_condition} is satisfied, then the $l$-th \LSTM{} layer is also \ISSp{2}.
\end{remark}
Additionally, it is possible to demonstrate that an \LSTM{} network with $L$ \strictISS{} \LSTM{} layers 
is \strictISS{}, as claimed in the following Theorem.
\begin{theorem}[\strictISS{} for an \LSTM{} network]
	\label{th:cascaded_lstm_iss_fc}
	An \LSTM{} network as in \eqref{eq:LSTM_model} is \strictISS{} if each \LSTM{} layer that composes it satisfies the condition in \eqref{eq:lstm_stability_condition}.
\end{theorem}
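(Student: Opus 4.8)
The plan is to exploit the purely feed-forward (cascade) interconnection of the $L$ layers, combined with a structural boundedness property of the hidden states, and then to apply Theorem~\ref{th:lstm_iss} one layer at a time by induction on $l$. First I would establish the structural bound: from \eqref{eq:LSTM_layer_hidden_state_update} we have $\boldsymbol{h}_{k+1}^{(l)} = \boldsymbol{o}_k^{(l)} \circ \boldsymbol{\tanh}(\boldsymbol{c}_{k+1}^{(l)})$, and since every entry of the sigmoid output $\boldsymbol{o}_k^{(l)}$ lies in $(0,1)$ and every entry of $\boldsymbol{\tanh}(\cdot)$ lies in $(-1,1)$, it follows that $\normsmall{\boldsymbol{h}_{k+1}^{(l)}}{\infty} < 1$ for every $k$ and every $l$. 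Together with Assumption~\ref{ass:initial_hidden_states}, this yields $\normsmall{\boldsymbol{h}_k^{(l)}}{\infty} < 1$ for all $k \in \mathbb{N}$. This is precisely what legitimizes the inner-layer input bound $\tilde{\boldsymbol{u}}_{\max}^{(l)} = \boldsymbol{1}_{n_{\mathrm{hu}}^{(l-1)}}$ in \eqref{eq:bounds_on_the_inputs_of_each_layer}: for $l \in \{2,\dots,L\}$ the signal $\tilde{\boldsymbol{u}}_k^{(l)} = \boldsymbol{h}_{k+1}^{(l-1)}$ satisfies Assumption~\ref{as:boundedness_of_input} with this bound, so Theorem~\ref{th:lstm_iss} applies verbatim to each layer whenever \eqref{eq:lstm_stability_condition} holds.

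Next I would run the induction. The base case $l=1$ is immediate: the layer-$1$ input is the external $\boldsymbol{u}_k$, which is bounded by Assumption~\ref{as:boundedness_of_input}, so Theorem~\ref{th:lstm_iss} furnishes functions $\beta^{(1)} \in \mathcal{KL}$ and $\gamma_u^{(1)}, \gamma_b^{(1)} \in \mathcal{K}_\infty$ bounding $\normsmall{\boldsymbol{x}_k^{(1)}}{\infty}$ in terms of $\normsmall{\boldsymbol{x}_0^{(1)}}{\infty}$, $\max_{0 \le h < k}\normsmall{\boldsymbol{u}_h}{\infty}$, and the layer biases. For the inductive step I assume that layer $l-1$ is \strictISS{} with respect to $\boldsymbol{u}$; since $\boldsymbol{h}^{(l-1)}$ is a sub-vector of $\boldsymbol{x}^{(l-1)}$ we have $\normsmall{\boldsymbol{h}_k^{(l-1)}}{\infty} \le \normsmall{\boldsymbol{x}_k^{(l-1)}}{\infty}$, so the layer-$(l-1)$ hidden state inherits an ISS-type bound in $\boldsymbol{u}$. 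Theorem~\ref{th:lstm_iss} gives layer $l$ the \strictISS{} property with respect to its own input $\boldsymbol{h}^{(l-1)}$; substituting the inductive bound on $\max_{0\le h<k}\normsmall{\boldsymbol{h}_{h+1}^{(l-1)}}{\infty}$ into the input-gain term $\gamma_u^{(l)}(\cdot)$ of layer $l$ then closes the recursion.

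The delicate part — and what I expect to be the main obstacle — is carrying out this substitution so that the composed estimate retains a genuine $\mathcal{KL}$ decay in $k$ rather than collapsing into a frozen constant (the crude bound $\gamma_u^{(l)}(1)$ obtained from the structural estimate alone would spuriously survive at $\boldsymbol{u}=\boldsymbol{0}$, $\boldsymbol{b}=\boldsymbol{0}$ and would violate the $0$-GAS requirement implicit in Definition~\ref{def:strict_iss}). I would handle this with the standard cascade-\ISS{} argument: split the horizon $[0,k]$ at $\lfloor k/2 \rfloor$, use monotonicity of $\beta^{(l-1)}$ in its second argument together with the closure properties $\mathcal{K}_\infty \circ \mathcal{K}_\infty \subseteq \mathcal{K}_\infty$ and the fact that $\gamma_u^{(l)} \circ \beta^{(l-1)}(\cdot, \lfloor k/2\rfloor)$ is again a $\mathcal{KL}$ function, and reassemble the two pieces into a bound of the form \eqref{eq:strict_iss}. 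This delivers, for each $l$, functions $\beta^{(l)} \in \mathcal{KL}$ and $\gamma_u^{(l)}, \gamma_b^{(l)} \in \mathcal{K}_\infty$ expressing the \strictISS{} of layer $l$ directly in terms of the external input.

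Finally I would assemble the full-state statement. Since $\boldsymbol{x}_k$ is the stacking of the per-layer states and $\normsmall{\boldsymbol{x}_k}{\infty} = \max_{l}\normsmall{\boldsymbol{x}_k^{(l)}}{\infty}$, summing (or maximizing) the $L$ per-layer bounds produces aggregate functions $\beta = \sum_{l} \beta^{(l)} \in \mathcal{KL}$ and $\gamma_u = \sum_l \gamma_u^{(l)} \in \mathcal{K}_\infty$, with the collection of all layer biases playing the role of the single bias $\boldsymbol{b} \in \mathbb{R}^{n_x}$ in Definition~\ref{def:strict_iss} through $\gamma_b = \sum_l \gamma_b^{(l)}$. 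The Fully Connected output equation \eqref{eq:LSTM_layer_output_equation} is a static affine map of $\boldsymbol{h}^{(L)}$ and therefore does not affect state stability. Hence the network satisfies \eqref{eq:strict_iss}, establishing its \strictISS{}.
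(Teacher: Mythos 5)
Your proposal is correct and takes essentially the same approach as the paper: the paper's entire proof of Theorem~\ref{th:cascaded_lstm_iss_fc} consists of viewing the $L$ layers as a cascade and citing \cite{jiang_input--state_2001} for the fact that a cascade of \strictISS{} systems is \strictISS{}. Your induction --- the hidden-state bound legitimizing the application of Theorem~\ref{th:lstm_iss} to inner layers (which the paper itself establishes in \eqref{eq:hidden_state_bound} and \eqref{eq:input_bound_for_each_layer}), followed by the horizon-splitting substitution of each layer's input gain --- is simply a self-contained proof of that cited cascade lemma, correctly identifying and resolving the zero-input decay issue that a crude constant bound on the hidden states would create.
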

\begin{proof}
	See \proofreference{}.
\end{proof}

\subsection{\strictISS{}-promoted \LSTM{} training}
\label{ss:iss_promoted_lstm_training}
This Section tackles the estimation of the parameters $\theta$ in \eqref{eq:LSTM_parameters}. 
For the model identification purpose, we assume to have at our disposal a set of $N_{\mathrm{e}} \in \mathbb{N}$ sequences of data $\mathcal{D} = \{ \mathcal{D}^{(1)}, \dots, \mathcal{D}^{(N_{\mathrm{e}})}\}$, where each $\mathcal{D}^{(e)} = \{(\boldsymbol{u}_{k}^{(e)}, \boldsymbol{y}_{k}^{(e)}): k \in \{0, \dots, N^{(e)} - 1 \} \}, e \in \{1, \ldots, N_{\mathrm{e}}\}$, consists of input-output data obtained by applying the input sequence $\mathbf{u}^{(e)} = \{\boldsymbol{u}_h^{(e)} \in \mathcal{U}: h \in \{0, \ldots,  N^{(e)} - 1\}\}$ to the system under study. We remark that each $\mathcal{D}^{(e)}$ is composed of $N^{(e)} \in \mathbb{N}$ data in total. We split $\mathcal{D}$ into training, validation, and test datasets, namely $\mathcal{D}_{\mathrm{tr}}, \mathcal{D}_{\mathrm{val}}$, and $\mathcal{D}_{\mathrm{tst}}$ respectively, as follows:
\begin{equation}
	\label{eq:dataset_definition}
	\mathcal{D}_{j} = \{\mathcal{D}^{(e)}: e \in \mathcal{I}_{j}\},
\end{equation}
where $j \in \{\mathrm{tr}, \mathrm{val}, \mathrm{tst}\}$ and $\mathcal{I}_{j} \subseteq \{1, \ldots, N_{\mathrm{e}}\}$ are sets of indexes such that $\mathcal{D}_{\mathrm{tr}} \cup \mathcal{D}_{\mathrm{val}} \cup \mathcal{D}_{\mathrm{tst}} = \mathcal{D}$ and $\mathcal{D}_{\mathrm{tr}} \cap \mathcal{D}_{\mathrm{val}} = \mathcal{D}_{\mathrm{val}} \cap \mathcal{D}_{\mathrm{tst}} = \mathcal{D}_{\mathrm{tr}} \cap \mathcal{D}_{\mathrm{tst}} = \emptyset$. In practice, $\mathcal{D}_{\mathrm{tr}}$ and $\mathcal{D}_{\mathrm{val}}$ are used for model estimation while $\mathcal{D}_{\mathrm{tst}}$ for performance assessment. 
Now, let $\mathcal{I} \subseteq \{1, \ldots, N_{\mathrm{e}}\}$ be a set of indexes such as those in \eqref{eq:dataset_definition}. We define the Mean Squared Error (\MSE{}) over the datasets encompassed by $\mathcal{I}$ as:
\begin{equation}
	\label{eq:MSE}
	\mathrm{\MSE{}}\left(\theta; \mathcal{I}\right)\!=\!\frac{1}{\left|\mathcal{I}\right|} \! \sum_{e \in \mathcal{I}}\left[\frac{1}{N^{(e)}} \!\!\!\! \sum_{k = 0}^{N^{(e)}\!-\!1} \norm{\boldsymbol{y}_{k}^{(e)}\!-\!\hat{\boldsymbol{y}}_{k}^{(e)}\!\left(\theta\right)}{2}^2\right],
\end{equation}
where $\hat{\boldsymbol{y}}_{k}^{(e)}\left(\theta\right)$ is the prediction of \eqref{eq:LSTM_model} at time step $k \in \{0, \dots, N^{(e)} - 1 \}$ simulated using the input sequence $\mathbf{u}^{(e)}, e \in \mathcal{I},$ and parameters $\theta$. Traditionally, $\theta$ in \eqref{eq:LSTM_parameters} is found via the minimization of the training set error $\mathrm{\MSE{}}\left(\theta; \mathcal{I_{\mathrm{tr}}}\right)$. 
However, to ensure that the resulting \LSTM{} network is \strictISS{} as in Theorem \ref{th:cascaded_lstm_iss_fc}, we would need to carry out a constrained minimization of $\mathrm{\MSE{}}\left(\theta; \mathcal{I_{\mathrm{tr}}}\right)$ with $\theta$ satisfying \eqref{eq:lstm_stability_condition} for all $l \in \{1, \ldots, L\}$. Yet, most optimization algorithms for neural networks (such as AdaGrad, RMSProp, and Adam \cite{goodfellow_deep_2016}) are iterative unconstrained gradient-based optimization procedures. Consequently, to promote the \strictISS{} property, we define the stability term
\begin{equation*}
	\label{eq:stability_term}
	\mathrm{\ISS{}}_{\infty}^{(l)}(\theta) = \bar{\sigma}_f^{(l)} + \bar{\sigma}_i^{(l)} \left\| R_g^{(l)} \right\|_\infty - 1
\end{equation*}
according to the condition in \eqref{eq:lstm_stability_condition}, the stability margin $\margin \in [0, 1)$, and re-write the loss function in \eqref{eq:MSE} as:
\begin{equation}
	\label{eq:modified_loss_penalty}
	\ell(\theta) = \mathrm{\MSE{}}\left(\theta; \mathcal{I_{\mathrm{tr}}}\right) + \ISSpenalty \sum_{l=1}^{L} \max\{\mathrm{\ISS{}}_{\infty}^{(l)}(\theta) + \margin, 0\},
\end{equation}
where $\ISSpenalty \in \mathbb{R}_{\geq 0}$ is the penalty weight. The second term of \eqref{eq:modified_loss_penalty} discourages sets of parameters that violate the condition in \eqref{eq:lstm_stability_condition} with some margin. The loss in \eqref{eq:modified_loss_penalty} is inspired by \cite{terzi_learning_2021,bonassi_stability_2021}, where the authors promote the \ISSp{2} and \strictISS{} of \LSTM{} and \GRU{} networks respectively in a similar fashion. 
\begin{remark}
	Intuitively, the derived condition in \eqref{eq:lstm_stability_condition} rather than \eqref{eq:lstm_stability_condition_ISS2} makes network training more stable as less parameters are involved in the penalty term in \eqref{eq:modified_loss_penalty}. 
\end{remark}
In this work, we also modify the traditional early stopping strategy \cite{goodfellow_deep_2016} to ensure that $\theta$ returned by the minimization of \eqref{eq:modified_loss_penalty} is such that \eqref{eq:lstm_stability_condition} holds $\forall l \in \{1, \ldots, L\}$, i.e. the \LSTM{} in \eqref{eq:LSTM_model} is \strictISS{}. Essentially, \LSTM{} training is an iterative procedure that works as follows: starting from an initial set of parameters $\theta_0$, at each iteration a gradient-based optimizer updates $\theta$ based on the gradient of \eqref{eq:modified_loss_penalty}, namely $\nabla \ell(\theta)$, and a learning rate $\learnrate \in \mathbb{R}_{\geq 0}$. Every $\kappa_{\mathrm{val}} \in \mathbb{N}$ iterations, we compute the \MSE{} in \eqref{eq:MSE} on the validation set $\mathcal{D}_{\mathrm{val}}$ 
and assess if the \LSTM{} at that iteration is \strictISS{}. If the validation error improves while \eqref{eq:lstm_stability_condition} holds $\forall l \in \{1, \ldots, L\}$, we store that set of parameters. The training procedure is stopped once the $\mathrm{\MSE{}}\left(\theta; \mathcal{I_{\mathrm{val}}}\right)$ fails to improve for more than $p_{\mathrm{val}} \in \mathbb{N}$ validation checks or once a maximum number of iterations $\kappa_{\max} \in \mathbb{N}$ is reached, returning the set of parameters with the best performance on $\mathcal{D}_{\mathrm{val}}$, which we denote as $\theta^*$. 
\begin{remark}
	The proposed training strategy can be applied to any \RNN{} model for which a condition for \ISS{}, such as \eqref{eq:lstm_stability_condition}, is available.
\end{remark}
\section{Thermal system case study}
\label{s:thermal_system_under_study}
To assess the effectiveness of the methodology proposed in Section \ref{s:input_to_state_stable_lstm_networks}, we consider a thermal system case study. Specifically, we model the shrink tunnel treated in \cite{pitturelli_towards_2024}, a machine that is commonly used for bottle packaging purposes. 
The shrink tunnel, depicted in 
Figure \ref{fig:shrink_tunnel_scheme},
features an industrial oven and a conveyor belt. Bottles are mechanically grouped together and wrapped in a thin plastic film before entering the oven. As bottles move through the heated environment, the plastic film shrinks, tightly enclosing the bottles to form the packs. An infrared sensor detects the presence or absence of bottles at the tunnel entrance.
The oven cavity is divided into two interconnected heating zones. Twelve thermocouples in total, installed as reported in 
Figure \ref{fig:shrink_tunnel_scheme},
monitor the temperatures inside the cavity. Twelve Heat Resistors (\HR{}s), connected to the electrical grid via two Solid-State Relays (\SSR{}s) and two ElectroMechanical Relays (\EMR{}s) as in 
Figure \ref{fig:shrink_tunnel_scheme}, produce the heat needed to raise the oven temperature. In detail, the \SSR{}s and \EMR{}s modulate the voltage across the \HR{}s via Pulse Width Modulation (\PWM{}) according to the duty cycles supplied by a temperature controller and with a \PWM{} period $T_{\mathrm{P}} = 30\,\mathrm{s}$. Lastly, four convection fans with shared operating frequency promote the circulation of hot air inside the cavity. 

\begin{figure}[!htb]
	\centering
	\subfloat[Front view.]{
		\centering
		\includegraphics[width=\columnwidth]{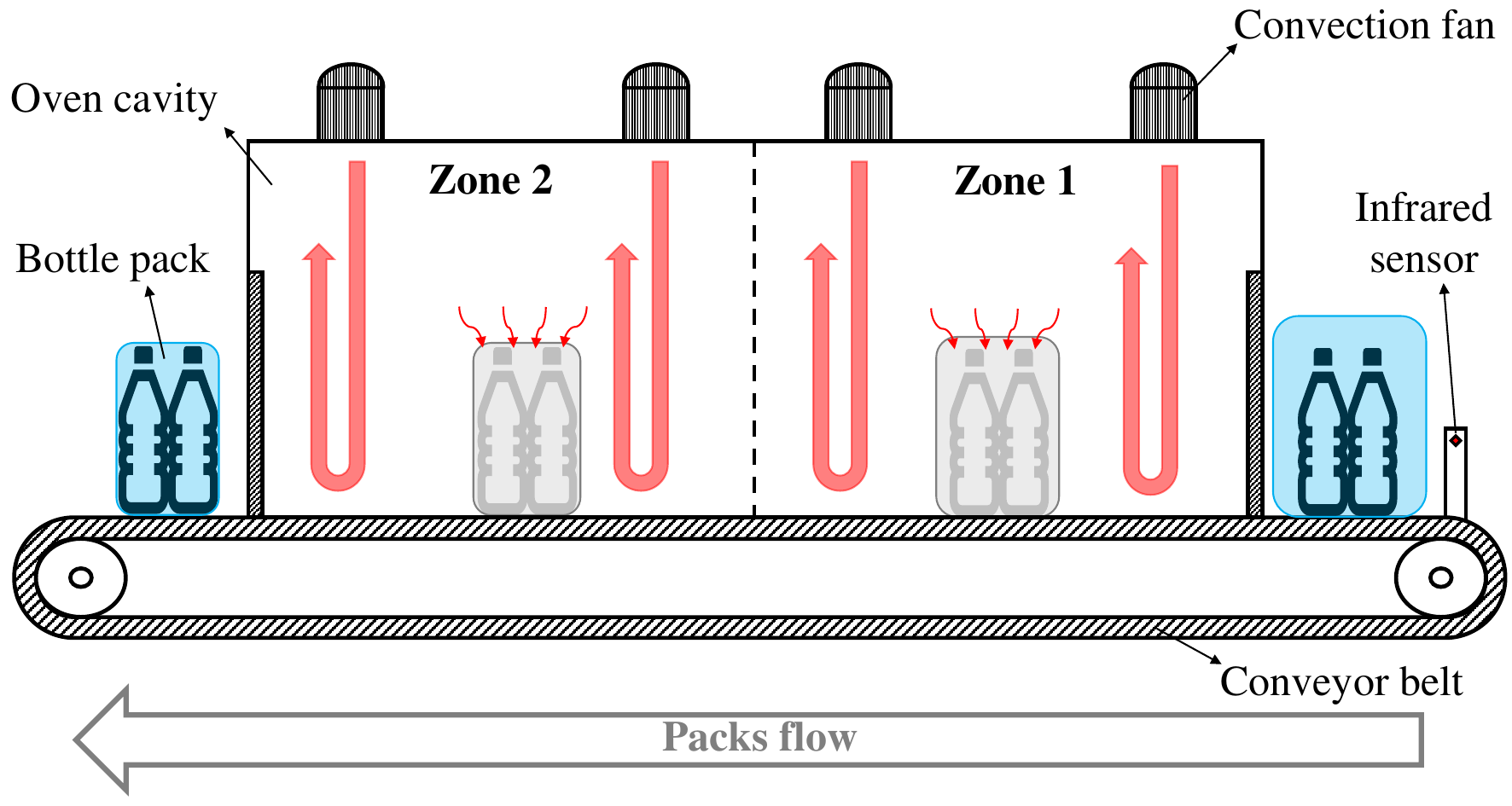}
	}
	
	\subfloat[Top view.]{
		\centering
		\includegraphics[width=\columnwidth]{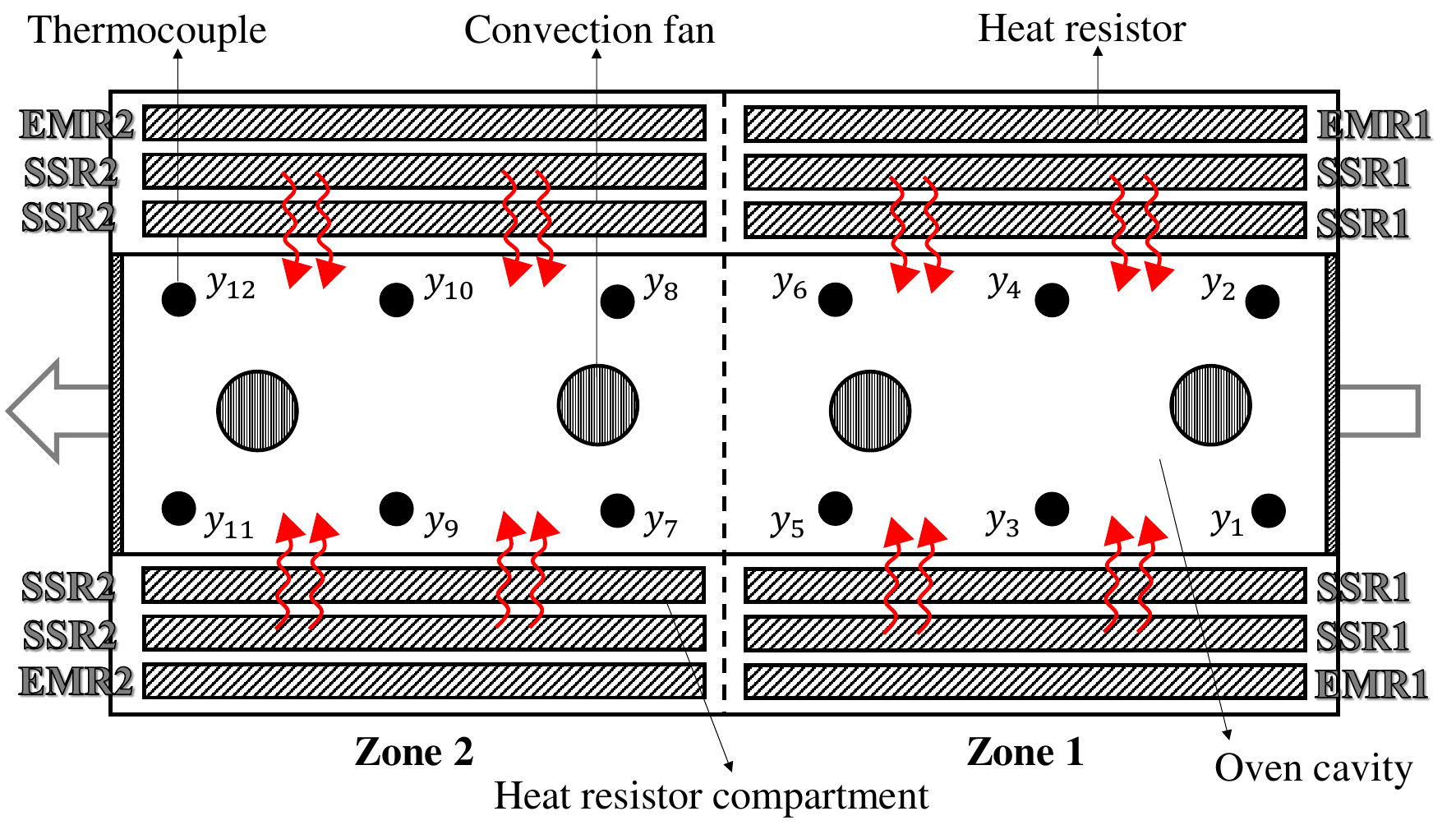}
	}
	\caption{
		\label{fig:shrink_tunnel_scheme}
		Schematic of the considered shrink tunnel. In zone $z, z \in \{1, 2\}$, the heat resistors marked by \SSR{}$z$ are driven by the same solid-state relay while those denoted by \EMR{}$z$ are managed by the electromechanical relay.}
	\vspace{-1em}
\end{figure}
Let $z \in \{1, 2\}$ be the zone of belonging and $r \in \{1, 2\}$ denote the relay type ($r = 1$ for the \SSR{}s and $r = 2$ for the \EMR{}s). The measurable continuous-time signals for the system under study are:
\begin{itemize}
	\item The temperatures measured by the thermocouples $y_{\iota}(t) \in \mathbb{R}$ (in $^\circ \mathrm{C}$), $\iota \in \{1, \ldots, 12\}$;
	\item The duty cycles for the heat resistors in zone $z$ driven by relay $r$, i.e. $w_{z}^{(r)}(t) \in [0, 1]$;
	\item The grid voltage $V_{\mathrm{g}}(t) \in \mathbb{R}_{>0}$ (in $\mathrm{V}$);
	\item The reading of the infrared sensor at the entrance of the oven $d_{\mathrm{p}}(t) \in \{0, 1\}$ ($d_{\mathrm{p}}(t) = 0$ denotes the absence of bottle packs and viceversa for $d_{\mathrm{p}}(t) = 0$);
	\item The convection fans operating frequency $d_{\mathrm{f}}(t) \in [40, 60]$ (in $\mathrm{Hz}$).
\end{itemize}
Each of these signals are sampled at a sampling time $T_{\mathrm{s}} = T_{\mathrm{P}} = 30\,\mathrm{s}$. Overall, the $n_u = 7$ inputs and $n_y = 12$ outputs for the system under study are:
\begin{subequations}
	\label{eq:inputs_outputs_shrink_tunnel}
	\begin{align}
		\boldsymbol{u} &= \begin{bmatrix}
			w_1^{(1)} & w_1^{(2)} & w_2^{(1)} & w_2^{(2)} & V_{\mathrm{g}} & d_{\mathrm{p}} & d_{\mathrm{f}}
		\end{bmatrix}^\top, \\
		\boldsymbol{y} &= \begin{bmatrix}
			y_1 & \ldots & y_{12}
		\end{bmatrix}^\top.
	\end{align}
\end{subequations}

\subsection{Physics-based model}
\label{ss:grey_box_thermal_model}
As a baseline for comparison, we consider a grey-box physics-based model for the shrink tunnel under study. We assume that the temperatures $y_\iota(t), \iota \in \{1, \ldots, n_y\},$ result from the sum of three contributions:
\begin{equation}
	\label{eq:physics_temperature_contributions}
	\boldsymbol{y}(t) = \boldsymbol{T}(t) + \delta\boldsymbol{T}_{\mathrm{p}}(t) + \delta\boldsymbol{T}_{\mathrm{f}}(t),
\end{equation}
where $\boldsymbol{T}(t), \delta\boldsymbol{T}_{\mathrm{p}}(t)$, and $\delta\boldsymbol{T}_{\mathrm{f}}(t)$ are due to the heat produced by the heat resistors (and ambient temperature), the flow of bottle packs inside the oven cavity, and the fan frequency respectively. $\boldsymbol{T}(t)$ is modeled via an electro-equivalent thermal circuit as in \cite[Fig. 1b]{previtali_continuous_2024}, leading to:
\begin{equation}
	\label{eq:T_mu}
	\dot{\boldsymbol{T}}(t) = A_{TT} \boldsymbol{T}(t) + B_q \boldsymbol{q}(t) + \boldsymbol{b}_{T} T_{\mathrm{a}}(t),
\end{equation}
where $\boldsymbol{q}(t) = \begin{bmatrix} q_1(t) & q_2(t)\end{bmatrix}^\top \in \mathbb{R}_{\geq 0}^2$ (in $\frac{\mathrm{J}}{\mathrm{s}}$) are the heat flow rates produced by the heat resistors in each zone and $T_{\mathrm{a}}(t) \in \mathbb{R}$ (in $^\circ \mathrm{C}$) is the ambient temperature. The derivation of the matrices and vectors $A_{TT} \in \mathbb{R}^{n_y \times n_y}, B_q \in \mathbb{R}^{n_y \times 2}$, and $\boldsymbol{b}_{T} \in \mathbb{R}^{n_y}$ is out of scope of this paper, the reader is referred to \cite[Section 3.1]{previtali_continuous_2024} for further details. In any case, $A_{TT}, B_q,$ and $\boldsymbol{b}_{T}$ depend on several thermal resistances and the thermal capacitance of the oven cavity, which are model parameters. Due to Joule heating, the heat flow rates $q_z(t), z \in \{1, 2\},$ are given by:
\begin{equation}
	\label{eq:heat_flow_rates}
	q_z(t)\!=\!\frac{1}{R_{\mathrm{heat}}}\!\left[2 V_z^{(1)}(t; w_{z}^{(1)}\!\!, V_{\mathrm{g}})^2\!+\!V_z^{(2)}(t; w_{z}^{(2)}\!\!, V_{\mathrm{g}})^2\right],
\end{equation}
where $R_{\mathrm{heat}} \in \mathbb{R}_{>0}$ (in $\Omega$) is the ohmic resistance of the heat resistors and $V_z^{(r)}(t; \cdot) \in \mathbb{R}_{\geq 0}$ (in $\mathrm{V}$) are the voltages across the \HR{}s in zone $z$ driven by relay $r \in \{1, 2\}$. The $V_z^{(r)}(t; \cdot)$'s depend on the duty cycles $w_{z}^{(r)}(t)$, due to \PWM{}, and the grid voltage $V_{\mathrm{g}}(t)$. Let $\boldsymbol{V}_{\mathrm{sq}}(t; \cdot) \in \mathbb{R}_{\geq 0}^{4}$ (in $\mathrm{V}^2$) and $\boldsymbol{w}(t) \in [0, 1]^4$ be the vectors of squared grid voltages and duty cycles respectively. Then, we model the propagation of heat from the \HR{}s to the oven cavity via a first-order low-pass filter with unitary gain \cite{previtali_continuous_2024}:
\begin{equation}
	\label{eq:filtered_heat_flow_rates}
	\dot{\boldsymbol{q}}^{(\mathrm{f})}(t) = A_{qq} \boldsymbol{q}^{(\mathrm{f})}(t) + B_{q} \boldsymbol{\boldsymbol{V}}_{\mathrm{sq}}(t; \boldsymbol{w}, V_{\mathrm{g}}),
\end{equation}
where $\boldsymbol{q}^{(\mathrm{f})}(t) \in \mathbb{R}_{\geq 0}$ are the filtered signals, 
$A_{qq} = -\diag{\frac{1}{\tau_{q,1}}, \frac{1}{\tau_{q,2}}}$, 
$\tau_{q,z} \in \mathbb{R}_{\geq 0}$ (in $\mathrm{s}$), $z \in \{1, 2\}$, being the time constants of the filters, and 
$B_{q} = - \frac{1}{R_{\mathrm{heat}}} A_{qq} \begin{bmatrix}
	2 & 1 & 0 & 0 \\
	0 & 0 & 2 & 1
\end{bmatrix}$ (see \eqref{eq:heat_flow_rates}). Then, in \eqref{eq:T_mu}, we replace $\boldsymbol{q}(t)$ with $\boldsymbol{q}^{(\mathrm{f})}(t)$.

Finally, for what concerns $\delta\boldsymbol{T}_{\mathrm{p}}(t)$ and $\delta\boldsymbol{T}_{\mathrm{f}}(t)$ in \eqref{eq:physics_temperature_contributions}, these are modeled via first-order low-pass filters as follow:
\begin{subequations}
	\label{eq:disturbance_eqs}
	\begin{align}
		\delta \dot{\boldsymbol{T}}_{\mathrm{p}}(t) &= A_{\mathrm{pp}} \delta \boldsymbol{T}_{\mathrm{p}}(t) + \boldsymbol{b}_{\mathrm{p}} d_{\mathrm{p}}(t), \\
		\delta \dot{\boldsymbol{T}}_{\mathrm{f}}(t) &= A_{\mathrm{ff}} \delta \boldsymbol{T}_{\mathrm{f}}(t) + \boldsymbol{b}_{\mathrm{f}} d_{\mathrm{f}}(t), 
	\end{align}
\end{subequations}
where $A_{\mathrm{pp}}, A_{\mathrm{ff}} \in \mathbb{R}^{n_y \times n_y}$ and $\boldsymbol{b}_{\mathrm{p}}, \boldsymbol{b}_{\mathrm{f}} \in \mathbb{R}^{n_y}$. In particular, 
$A_{\mathrm{pp}} = -\diag{\frac{1}{\tau_{\mathrm{p}, 1}}^{-1}, \ldots, \frac{1}{\tau_{\mathrm{p},n_y}}}$, $\tau_{\mathrm{p},\iota} \in \mathbb{R}_{\geq 0}$ (in $\mathrm{s}$), $\iota \in \{1, \ldots, n_y\}$, being the time constants of the filters, and $\boldsymbol{b}_{\mathrm{p}} = - A_{\mathrm{pp}} \begin{bmatrix}
	\mu_{\mathrm{p},1} & \cdots & \mu_{\mathrm{p},n_y}
\end{bmatrix}^\top$, where $\mu_{\mathrm{p},\iota} \in \mathbb{R}$ (in $^\circ \mathrm{C}$) are the gains. $A_{\mathrm{ff}}$ and $\boldsymbol{b}_{\mathrm{f}}$ are defined in a similar fashion. The relationships in \eqref{eq:disturbance_eqs} are derived from experimental insights. In practice, the insertion of bottle packs causes the oven temperatures to lower over time until reaching an equilibria. Similarly, a change in the operating frequency of the fans shifts the temperature equilibria with certain dynamics.

\paragraphfont{Discretization}
The just-derived continuous-time shrink tunnel model can be discretized following the approach proposed in \cite[Section 3.2]{previtali_grey-box_2024}, leading to the following nonlinear discrete-time state-space model:

\begin{equation}
	\label{eq:discrete-time_grey-box_model}
	\begin{cases}
		\boldsymbol{q}_{k+1}^{(\mathrm{f})} = \tilde{A}_{qq} \boldsymbol{q}_{k}^{(\mathrm{f})} + \tilde{B}_{q} V_{\mathrm{g}, k}^2 \boldsymbol{w}_k, \\
		\boldsymbol{T}_{k+1} = \tilde{A}_{TT} \boldsymbol{T}_{k} + \tilde{B}_{q} \boldsymbol{q}_{k}^{(\mathrm{f})} + \tilde{\boldsymbol{b}}_T T_{\mathrm{a}, k}, \\
		\delta \boldsymbol{T}_{\mathrm{p}, k+1} = \tilde{A}_{\mathrm{pp}} \delta \boldsymbol{T}_{\mathrm{p}, k} + \tilde{\boldsymbol{b}}_{\mathrm{p}} d_{\mathrm{p}, k}, \\
		\delta \boldsymbol{T}_{\mathrm{f}, k+1} = \tilde{A}_{\mathrm{ff}} \delta \boldsymbol{T}_{\mathrm{f}, k} + \tilde{\boldsymbol{b}}_{\mathrm{f}} d_{\mathrm{f}, k}, \\
		\boldsymbol{y}_k = \boldsymbol{T}_{k} + \delta \boldsymbol{T}_{\mathrm{p}, k} + \delta \boldsymbol{T}_{\mathrm{f}, k},
	\end{cases}
\end{equation}
where the matrices and vectors highlighted with a $\tilde{\cdot}$ result from the discretization of their continuous-time counterparts. Overall, the model in \eqref{eq:discrete-time_grey-box_model} has $n_x = 38$ states and $74$ parameters, i.e. the thermal resistances for \eqref{eq:T_mu} (see \cite{previtali_continuous_2024}), the ohmic resistance $R_{\mathrm{heat}}$ in \eqref{eq:heat_flow_rates}, and the time constants and gains for the filters in \eqref{eq:filtered_heat_flow_rates} and \eqref{eq:disturbance_eqs}.
\begin{remark}
	\label{rem:ISS_thermal_system}
	The continuous-time model composed of \eqref{eq:T_mu}, \eqref{eq:filtered_heat_flow_rates}, and \eqref{eq:disturbance_eqs} is input-to-state stable as in \cite{sontag_input--state_1995} since it is the composition of \ISS{} models (i.e., the electro-equivalent thermal circuit model in \cite[Fig. 1b]{previtali_continuous_2024} and three stable low-pass filters). However, due to the discretization, \eqref{eq:discrete-time_grey-box_model} may not be \strictISS{} as in Definition \ref{def:strict_iss}. In any case, by introducing a new input $\boldsymbol{w}_{\mathrm{V}, k} = V_{\mathrm{g}, k}^2 \boldsymbol{w}_k$, the model in \eqref{eq:discrete-time_grey-box_model} becomes linear in $\boldsymbol{u}_{\mathrm{V}, k} = \begin{bmatrix}
		\boldsymbol{w}_{\mathrm{V}, k}^\top & T_{\mathrm{a}, k} & d_{\mathrm{p}, k} & d_{\mathrm{f}, k}
	\end{bmatrix}^\top$ and its stability can be analyzed by checking the eigenvalues of the state matrix.
\end{remark}
\section{Experimental results}
\label{s:experimental_results}
This Section analyzes the accuracy of the \strictISS{}-promoted \LSTM{} in \eqref{eq:LSTM_model}, the \strictISS{}-promoted \GRU{} in \cite{bonassi_stability_2021}, and the grey-box physics-based model in \eqref{eq:discrete-time_grey-box_model} on experimental data coming from the shrink tunnel described in Section \ref{s:thermal_system_under_study}. For completeness, the \ISS{}-promoted-\RNN{}s are also compared to \RNN{}s identified via traditional training strategies.

\paragraphfont{Experimental setup}
A total of $N_{\text{e}} = 12$ experiments were carried out on the shrink tunnel under study, each trial lasting between $2$ and $7.5$ hours.
The experiments encompass a variety of operating conditions for the thermal system, including the temperature responses due to the application of constant duty cycles and pseudo-random binary sequences, 
the insertion of bottle packs inside the oven cavity at different conveyor belt speeds, and data related to the closed-loop operation of the system under study. We split the data into training, validation, and test datasets (Section \ref{s:problem_statement}) in such a way that $\left|\mathcal{D}_{\mathrm{tr}}\right| = 9, \left|\mathcal{D}_{\mathrm{val}}\right| = 2$, and $\left|\mathcal{D}_{\mathrm{tst}}\right| = 1$, making sure that each dataset contains at least a sequence where all the inputs in \eqref{eq:inputs_outputs_shrink_tunnel} vary\footnote{We point out that the ambient temperature stays constant throughout each experiment and, consequently, we can remove it from the temperature measures used for identification purposes and add it back a-posteriori. This rationale is also motivated by the fact that $T_{\mathrm{a}}(t)$ is not measured directly.}.

\paragraphfont{\LSTM{} and \GRU{} training}
As customary, each signal in \eqref{eq:inputs_outputs_shrink_tunnel} is normalized before network training \cite{goodfellow_deep_2016}, making $\boldsymbol{u}_{\mathrm{max}} = \boldsymbol{1}_{n_u}$ in \eqref{eq:input_bound}.
Afterwards, we estimate $\theta$ in \eqref{eq:LSTM_parameters} according to Section \ref{ss:iss_promoted_lstm_training} with penalty coefficient $\ISSpenalty = 0.05$, stability margin $\margin = 0.05$, maximum number of iterations $\kappa_{\max} = 2500$, $\kappa_{\mathrm{val}} = 25$, and validation patience $p_{\mathrm{val}} = 20$. The number of layers for the \LSTM{} in \eqref{eq:LSTM_model} is set to $L = 3$. Instead, $n_{\mathrm{hu}}^{(l)}, l \in \{1, \ldots, L\},$ and the learning rate $\learnrate$ are tuned via Bayesian optimization \cite{frazier_tutorial_2018}, by finding the set of hyper-parameters $\{n_{\mathrm{hu}}^{(1)}, \ldots, n_{\mathrm{hu}}^{(L)}, \learnrate\}$ that minimize the final validation \MSE{} attained during training as in Section \ref{ss:iss_promoted_lstm_training}. The Adam algorithm is used for network training \cite{goodfellow_deep_2016}. The same approach is used for a \GRU{} model with $L = 3$ layers and using the \strictISS{} condition in \cite[Equation (8)]{bonassi_stability_2021} rather than \eqref{eq:lstm_stability_condition}. Table \ref{tab:optimized_hyperparameters} reports the optimized hyper-parameters along with the total number of parameters $\theta$ of each network. We point out that the resulting \LSTM{} and \GRU{} are both \strictISS{} according to Definition \ref{def:strict_iss}. To evaluate the impact of \ISS{} promotion in \eqref{eq:modified_loss_penalty} on performance, we also train \LSTM{} and \GRU{} networks following traditional training strategies, i.e. by minimizing the $\mathrm{\MSE{}}\left(\theta; \mathcal{I_{\mathrm{tr}}}\right)$ in \eqref{eq:MSE}, using the hyper-parameters in Table \ref{tab:optimized_hyperparameters}. In this case, the resulting \RNN{}s are not \strictISS{} since \eqref{eq:lstm_stability_condition} and \cite[Equation (8)]{bonassi_stability_2021} do not hold.
\begin{table}[!htb]
	\centering
	\settablestretch
	\settablefontsize
	\caption{\label{tab:optimized_hyperparameters} Optimized hyper-parameters for the \LSTM{} and \GRU{} networks.}
	\begin{tabular}{cccccc}
		Model                   & $n_{\mathrm{hu}}^{(1)}$ & $n_{\mathrm{hu}}^{(2)}$ & $n_{\mathrm{hu}}^{(3)}$ & $\learnrate$       & \#parameters\tabularnewline
		\hline
		\colorrowoftable LSTM{} & $88$                    & $33$                    & $68$                    & $5.66\cdot10^{-3}$ & $7.84\cdot10^{4}$\tabularnewline
		GRU{}                   & $497$                   & $37$                    & $142$                   & $1.78\cdot10^{-3}$ & $8.91\cdot10^{5}$\tabularnewline
	\end{tabular}
\end{table}

\paragraphfont{Identification of the physics-based model}
Similarly to the \RNN{}s, the parameters of the physics-based model in \eqref{eq:discrete-time_grey-box_model} are estimated via the minimization of $\mathrm{\MSE{}}\left(\theta; \mathcal{I_{\mathrm{tr}}} \cup \mathcal{I_{\mathrm{val}}}\right)$ in \eqref{eq:MSE}, i.e. using both the training and validation datasets (no need for early stopping as in Section \ref{ss:iss_promoted_lstm_training}). \strictISS{} is checked a-posteriori as mentioned in Remark \ref{rem:ISS_thermal_system}, assessing that the resulting model satisfies that property.

\paragraphfont{Results}
Now, we evaluate the performances of the models on test data. 
We analyze the fits for each temperature $y^{(e)}_\iota, \iota \in \{1, \ldots, n_y\}$, $e \in \mathcal{I}_{\mathrm{tst}}$, which are defined as follows:
\begin{equation}
	\label{eq:fits}
	\mathrm{Fit}_{\iota}^{(e)} = 1 - \frac{\sqrt{\frac{1}{N^{(e)}}\sum_{k = 0}^{N^{(e)}-1} \left|y_{\iota, k}^{(e)} - \hat{y}_{\iota, k}^{(e)}(\theta^*)\right|^2}}{\max_k{y_{\iota, k}^{(e)}} - \min_k{y_{\iota, k}^{(e)}}},
\end{equation}
where $\hat{y}_{\iota, k}^{(e)}(\theta^*)$ are the identified models predictions. \figurenametext{} \ref{fig:fit_box_chart} illustrates the distributions of the fits for the different models. The \strictISS{} \LSTM{} and \strictISS{} \GRU{} achieve the best performance, with the former attaining a slightly higher median fit. Notably, promoting \strictISS{} leads to improved results compared to non-\strictISS{}-promoted training, suggesting that stability promotion contributes to better generalization. In contrast, the physics-based model underperforms relative to all \RNN{}s, exhibiting lower median fit and higher dispersion.
\begin{figure}[!htb]
	\centering
	\includegraphics[width=\columnwidth]{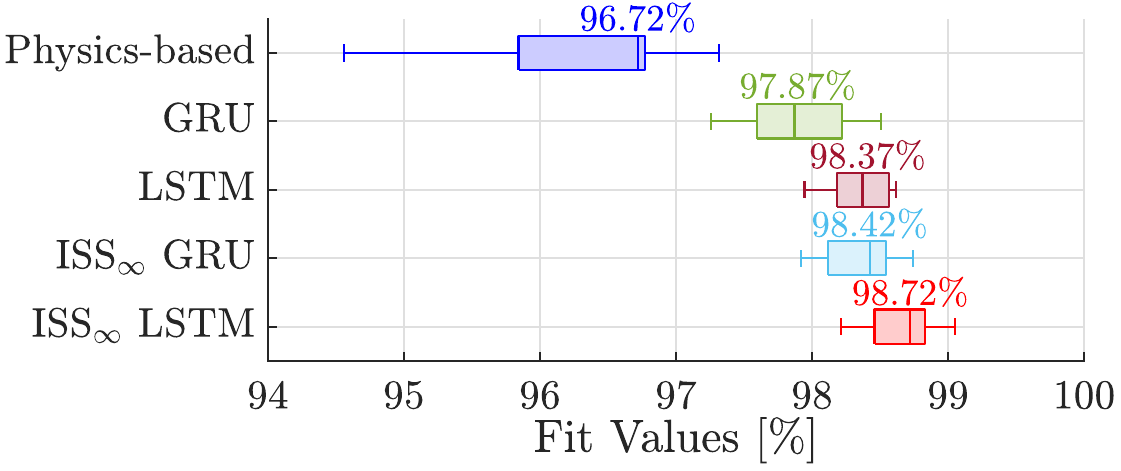}
	
	\caption{
		\label{fig:fit_box_chart}
		Box plot of the performance index in \eqref{eq:fits} for each model on test data. The Figure also reports the median fits.
	}
\end{figure}

As an example, \figurenametext{} \ref{fig:temperature} shows the test trial case for temperature $y_3$ over time, along with the inputs, for the \strictISS{} networks and the physics-based model. The \strictISS{} \LSTM{} and \strictISS{} \GRU{} networks closely track the real temperature, especially during fan frequency ($d_{\mathrm{f}}$) variations and pack disturbances ($d_{\mathrm{p}}$). Instead, the physics-based model only captures accurately the temperature response in the initial part of the experiment, between $0\,\mathrm{min}$ and $120\,\mathrm{min}$, which depends only on the duty cycles ($\boldsymbol{w}$) and grid voltage ($V_{\mathrm{g}}$).
\begin{figure}[!htb]
	\centering
	\includegraphics[width=\columnwidth]{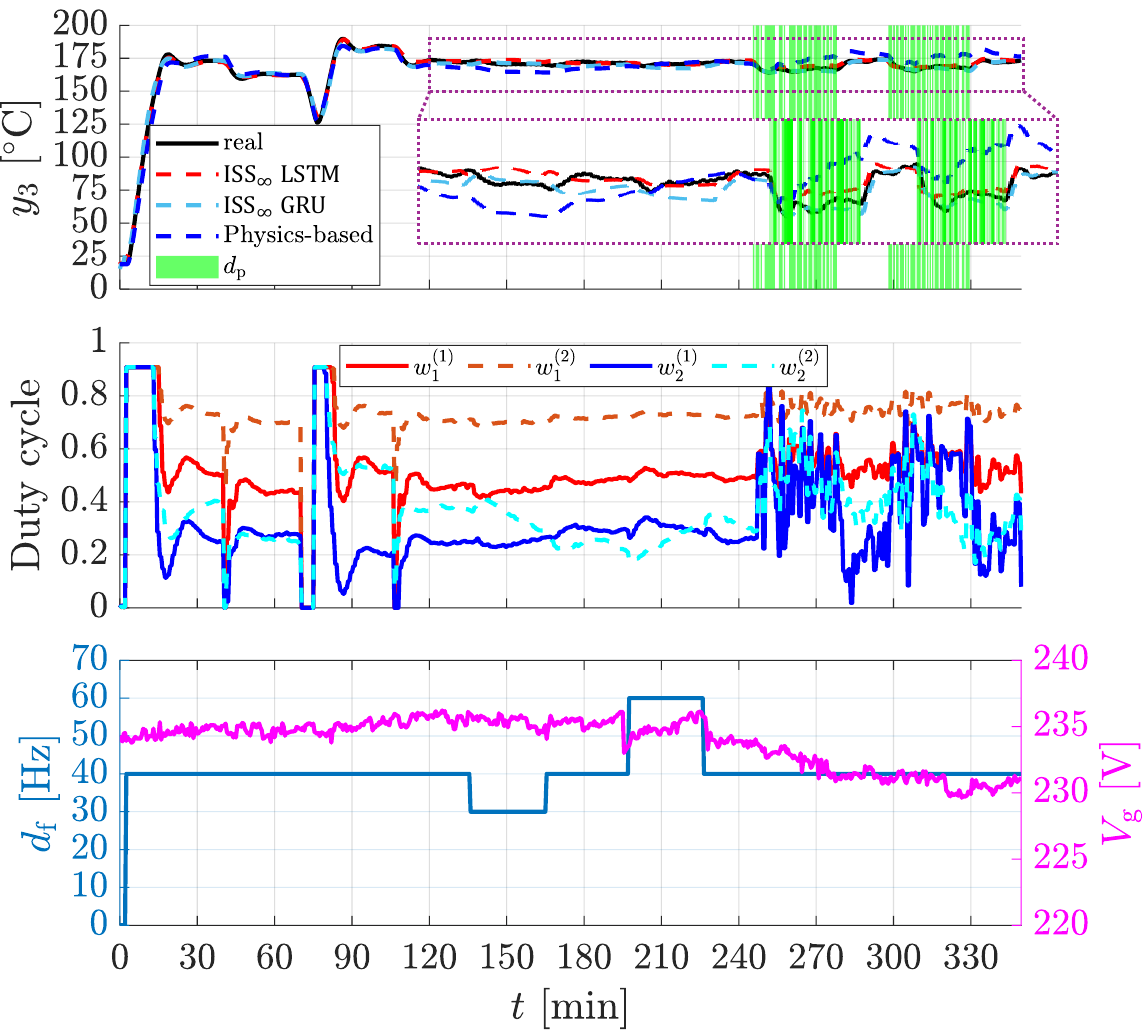}
	\caption{
		\label{fig:temperature}
		Comparison of temperature $y_3$ predictions on the test trial focusing on fan frequency changes and pack disturbances. The green vertical stripes denote when $d_{\mathrm{p}, k} = 1$.
	}
\end{figure}

In terms of model size, the \LSTM{}s require around a tenth of the \GRU{}s parameters (see Table \ref{tab:optimized_hyperparameters}), making them preferable both in terms of performance and complexity. However, both network models have significantly more parameters than the physics-based one, which only uses $74$ (see Section \ref{ss:grey_box_thermal_model}). Regardless, it is worth noticing that the derivation of the model in \eqref{eq:discrete-time_grey-box_model} requires a cumbersome and time-consuming engineering phase involving, e.g., the design of the electro-equivalent thermal circuit in \cite[Fig. 1b]{previtali_continuous_2024}. In contrast, the networks, while offering superior accuracy, rely entirely on data-driven training, making them sensitive to the quality of the data and potentially requiring more experiments to achieve accurate generalization.
\section{Conclusion}
\label{s:conclusion}
In this paper, we derive a sufficient condition to guarantee \strictISS{} of \LSTM{} networks which, compared to \cite[Proposition 1]{terzi_learning_2021} for \ISSp{2}, depends on fewer parameters. Then, we propose a training strategy with early stopping to promote input-to-state stability of recurrent neural networks. We later apply the proposed \strictISS{}-promoted \LSTM{} network on a thermal system case study, i.e. the identification of a shrink tunnel model from real data. The performances of the \strictISS{} \LSTM{} are compared against a \strictISS{} \GRU{} \cite{bonassi_stability_2021}, non-\strictISS{}-promoted \RNN{}s, and a physics-based grey-box thermal model. Experimental results show that the \strictISS{} \LSTM{} outperforms all other models in terms of predictive accuracy while requiring minimal prior knowledge of the system. 

\section*{Acknowledgments}
This work was partially funded by the EU European Defence Fund Project KOIOS (EDF-2021-DIGIT-R-FL-KOIOS). KOIOS is a project Funded by the European Union. Views and opinions expressed are however those of the author(s) only and do not necessarily reflect those of the European Union or European Defence Fund. Neither the European Union nor the granting authority can be held responsible for them.

\bibliographystyle{plain}
{
    \bibliography{bibliography.bib}
}

\appendix
\renewcommand{\appendixname}{Appendix}
\renewcommand{\thesection}{\appendixname~\Alph{section}}
\section{Proofs for the Theorems}
\label{s:appendix_proofs}
\paragraphfont{Proof of Theorem \ref{th:lstm_iss}}
Firstly, recall that the sigmoid and hyperbolic tangent functions are strictly monotonic and the following bounds hold for any $x \in \mathbb{R}$:
\begin{equation}
	\label{eq:sigma_tanh_bound}
	\sigma(x) \in (0,1), \quad \tanh(x) \in (-1,1).
\end{equation}
Consequently, due to \eqref{eq:gates_LSTM} and Assumption \ref{ass:initial_hidden_states}, we can bound the hidden state in \eqref{eq:LSTM_layer_hidden_state_update} for the $l$-th layer, $l \in \{1, \ldots, L\}$, of the \LSTM{} network as:
\begin{equation}
	\label{eq:hidden_state_bound}
	\boldsymbol{h}_k^{(l)} \in (-1,1)^{n_{\mathrm{hu}}^{(l)}}, \quad 
	\forall k \in \mathbb{N}.
\end{equation}
Then, due to \eqref{eq:hidden_state_bound} and Assumption \ref{as:boundedness_of_input}, we can bound the inputs for each layer (i.e. $\tilde{\boldsymbol{u}}_k^{(l)}$ in \eqref{eq:input_for_each_layer}) according to
\begin{equation}
	\label{eq:input_bound_for_each_layer}
	\tilde{\boldsymbol{u}}_k^{(l)} \in \tilde{\mathcal{U}}^{(l)} = \{\tilde{\boldsymbol{u}}: - \tilde{\boldsymbol{u}}_{\max}^{(l)} \leq \tilde{\boldsymbol{u}} \leq \tilde{\boldsymbol{u}}_{\max}^{(l)}\}, \quad \forall k \in \mathbb{N},
\end{equation}
where $\tilde{\boldsymbol{u}}_{\max}^{(l)}$ is defined as in \eqref{eq:bounds_on_the_inputs_of_each_layer}. Following \cite{terzi_learning_2021}, due to the properties and range of the sigmoid function in \eqref{eq:sigma_tanh_bound}, as well as the bounds in \eqref{eq:input_bound_for_each_layer}, we can find an upper bound for each component of the forget, input, and output gates in \eqref{eq:gates_LSTM}, i.e. $f_{\iota, k}^{(l)}, i_{\iota, k}^{(l)},$ and $o_{\iota, k}^{(l)}, \iota \in \{1, \ldots, n_{\mathrm{hu}}^{(l)}\},$ respectively, as:
\begin{equation}
	\label{eq:gate_bounds}
	|j_{\iota, k}^{(l)}| \leq \sigma\left(\norm{\begin{bmatrix} W_j^{(l)} \tilde{\boldsymbol{u}}_{\max}^{(l)} & R_{j}^{(l)} & \boldsymbol{b}_j^{(l)} \end{bmatrix}}{\infty}\right) = \bar{\sigma}_j^{(l)},
\end{equation}
where $j \in \{f, i, o\}$. Now, consider the cell state update in \eqref{eq:LSTM_layer_cell_state_update}. Taking the infinity norm and applying the triangle inequality, we get:
\begin{equation}
	\label{eq:cell_state_bound}
	\normsmall{\boldsymbol{c}_{k}^{(l)}}{\infty} \leq \normsmall{\boldsymbol{f}_{k-1}^{(l)}}{\infty} \normsmall{\boldsymbol{c}_{k-1}^{(l)}}{\infty} + \normsmall{\boldsymbol{i}_{k-1}^{(l)}}{\infty} \normsmall{\boldsymbol{g}_{k-1}^{(l)}}{\infty}.
\end{equation}
Using the bounds in \eqref{eq:gate_bounds} and recalling that $\norm{\boldsymbol{\tanh}\left(\boldsymbol{x}\right)}{\infty} \leq \norm{\boldsymbol{x}}{\infty}$ for any $\boldsymbol{x} \in \mathbb{R}^{n_x}$ due to the 1-Lipschitz continuity of the hyperbolic tangent function, it is possible to prove that:
\begin{align}
	\label{eq:cell_state_bound_simplified}
	\normsmall{\boldsymbol{c}_{k}^{(l)}}{\infty} & \leq \bar{\sigma}_f^{(l)} \normsmall{\boldsymbol{c}_{k-1}^{(l)}}{\infty} + \bar{\sigma}_i^{(l)} \normsmall{W_g^{(l)}}{\infty} \normsmall{\tilde{\boldsymbol{u}}_{k-1}^{(l)}}{\infty} + \nonumber \\
	& \  + \bar{\sigma}_i^{(l)} \normsmall{R_g^{(l)}}{\infty} \normsmall{\boldsymbol{h}_{k-1}^{(l)}}{\infty}
	+ \bar{\sigma}_i^{(l)} \normsmall{\boldsymbol{b}_g^{(l)}}{\infty}.
\end{align}
Similarly, for the hidden state in \eqref{eq:LSTM_layer_hidden_state_update}, we have:
\begin{equation}
	\label{eq:hidden_state_bound_simplified}
	\normsmall{\boldsymbol{h}_{k}^{(l)}}{\infty} \leq \bar{\sigma}_o^{(l)} \norm{\boldsymbol{\tanh}\left(\boldsymbol{c}_{k}^{(l)} \right)}{\infty} \leq \bar{\sigma}_o^{(l)} \normsmall{\boldsymbol{c}_{k}^{(l)}}{\infty}.
\end{equation}
By combining \eqref{eq:cell_state_bound_simplified} and \eqref{eq:hidden_state_bound_simplified}, we get:
\begin{equation}
	\begin{bmatrix}
		\normsmall{\boldsymbol{c}_{k}^{(l)}}{\infty}\! \\
		\normsmall{\boldsymbol{h}_{k}^{(l)}}{\infty}\!
	\end{bmatrix}
	\!\leq\!
	\mathbf{A}^{\!(l)}\!
	\begin{bmatrix}
		\normsmall{\boldsymbol{c}_{k-1}^{(l)}}{\infty}\! \\
		\normsmall{\boldsymbol{h}_{k-1}^{(l)}}{\infty}\!
	\end{bmatrix}
	+
	\mathbf{B}_u^{(l)} \normsmall{\tilde{\boldsymbol{u}}_{k-1}^{(l)}}{\infty}
	+
	\mathbf{B}_b^{(l)} \normsmall{\boldsymbol{b}_g^{(l)}}{\infty},
	\label{eq:state_update_matrix}
\end{equation}
where:
\begin{equation*}
	\label{eq:AB_definitions}
	\begin{aligned}
		& \mathbf{A}^{\!(l)} =
		\begin{bmatrix}
			\bar{\sigma}_f^{(l)}                      & \bar{\sigma}_i^{(l)} \normsmall{R_g^{(l)}}{\infty}                      \\
			\bar{\sigma}_o^{(l)} \bar{\sigma}_f^{(l)} & \bar{\sigma}_o^{(l)} \bar{\sigma}_i^{(l)} \normsmall{R_g^{(l)}}{\infty}
		\end{bmatrix}, \\
		\mathbf{B}_u^{(l)} =
		& \begin{bmatrix}
			\bar{\sigma}_i^{(l)} \normsmall{W_g^{(l)}}{\infty} \\
			\bar{\sigma}_o^{(l)} \bar{\sigma}_i^{(l)} \normsmall{W_g^{(l)}}{\infty}
		\end{bmatrix},
		\
		\mathbf{B}_b^{(l)} =
		\begin{bmatrix}
			\bar{\sigma}_i^{(l)} \\
			\bar{\sigma}_o^{(l)} \bar{\sigma}_i^{(l)}
		\end{bmatrix}.
	\end{aligned}
\end{equation*}
Next, consider the whole state vector $\boldsymbol{x}_k^{(l)} = \begin{bmatrix}
	\boldsymbol{c}_{k}^{(l)^\top} & \boldsymbol{h}_{k}^{(l)^\top}
\end{bmatrix}^\top$. We apply the infinity norm on both sides of the inequality in \eqref{eq:state_update_matrix} and iterate back to $k = 0$, obtaining:
\begin{subequations}
	\label{eq:state_bound_from_0}
	\begin{align}
		\label{eq:state_bound_from_0_beta}
		\normsmall{\boldsymbol{x}_{k}^{(l)}}{\infty}
		& \leq \norm{\left( \mathbf{A}^{\!(l)}\right)^{k}}{\infty} \normsmall{\boldsymbol{x}_{0}^{(l)}}{\infty} +                                                                \\
		\label{eq:state_bound_from_0_gamma_u}
		& \ + \norm{\sum_{h = 0}^{k - 1} \left( \mathbf{A}^{\!(l)}\right)^{\!k\!-\!1\!-\!h\!} \mathbf{B}_u^{(l)} \normsmall{\tilde{\boldsymbol{u}}_{h}^{(l)}}{\infty}}{\infty} + \\
		\label{eq:state_bound_from_0_gamma_b}
		& \ + \norm{\sum_{h = 0}^{k - 1} \left( \mathbf{A}^{\!(l)}\right)^{\!k\!-\!1\!-\!h\!} \mathbf{B}_b^{(l)} \normsmall{\boldsymbol{b}_g^{(l)}}{\infty}}{\infty}.
	\end{align}
\end{subequations}
Now, starting from \eqref{eq:state_bound_from_0} and assuming that the condition in \eqref{eq:lstm_stability_condition} holds, we derive the $\beta \in \mathcal{KL}$ and $\gamma_u, \gamma_b \in \mathcal{K}_\infty$ functions in Definition \ref{def:strict_iss} to prove the \strictISS{} of the $l$-th \LSTM{} layer. Notice that, since $\bar{\sigma}_o^{(l)} \in (0, 1)$ due to \eqref{eq:sigma_tanh_bound}, we have:
\begin{equation}
	\label{eq:infty_norm_A}
	\norm{\mathbf{A}^{\!(l)}}{\infty} = \bar{\sigma}_f^{(l)} + \bar{\sigma}_i^{(l)} \normsmall{R_g^{(l)}}{\infty}.
\end{equation}
The relationship in \eqref{eq:infty_norm_A} only applies when taking the infinity norm of $\mathbf{A}^{\!(l)}$ and allows us to derive the condition in \eqref{eq:lstm_stability_condition}, which depends on fewer parameters compared to \cite[Proposition 1]{terzi_learning_2021} (see also Remark \ref{rem:diff_from_original_work}). Instead, when analyzing the \ISSp{2} of the $l$-th \LSTM{} layer, we have to consider the spectral radius of $\mathbf{A}^{\!(l)}$ rather than its infinity norm in \eqref{eq:infty_norm_A}. Consequently, \eqref{eq:infty_norm_A} is one of the main differences from \cite[Proof of Theorem 1]{terzi_learning_2021} and allows us to compute explicitly $\beta \in \mathcal{KL}$ and $\gamma_u, \gamma_b \in \mathcal{K}_\infty$ as well as to obtain the condition in \eqref{eq:lstm_stability_condition} instead of the one in \cite[Proposition 1]{terzi_learning_2021}. 

Moving on, applying \eqref{eq:infty_norm_A} to the power of $\mathbf{A}^{\!(l)}$ in \eqref{eq:state_bound_from_0_beta}:
\begin{equation}
	\label{eq:infty_norm_A_product}
	\norm{\left( \mathbf{A}^{\!(l)}\right)^{k}}{\infty}\!\!\!\leq\!\prod_{h=1}^{k} \norm{\mathbf{A}^{\!(l)}}{\infty} \!\!\! = \! \left(\bar{\sigma}_f^{(l)} + \bar{\sigma}_i^{(l)} \normsmall{R_g^{(l)}}{\infty}\right)^k\!.
\end{equation}
Then, function $\beta$ can be easily derived from the right side of \eqref{eq:state_bound_from_0_beta} and \eqref{eq:infty_norm_A_product}, leading to:
\begin{equation}
	\label{eq:function_beta_ISS}
	\beta(\normsmall{\boldsymbol{x}_0^{(l)}}{\infty}, k) = \left(\bar{\sigma}_f^{(l)} + \bar{\sigma}_i^{(l)} \normsmall{R_g^{(l)}}{\infty}\right)^k \normsmall{\boldsymbol{x}_{0}^{(l)}}{\infty},
\end{equation}
which is clearly a $\mathcal{KL}$ function \cite{jiang_input--state_2001} due to the condition in \eqref{eq:lstm_stability_condition}. Next, to obtain $\gamma_{u}$, we derive an upper bound on \eqref{eq:state_bound_from_0_gamma_u} by applying \eqref{eq:infty_norm_A_product} as follows:
\begin{align}
	\label{eq:computations_for_gamma_u}
	& \norm{\sum_{h = 0}^{k - 1} \left( \mathbf{A}^{\!(l)}\right)^{\!k\!-\!1\!-\!h\!} \mathbf{B}_u^{(l)} \normsmall{\tilde{\boldsymbol{u}}_{h}^{(l)}}{\infty}}{\infty} \leq \ldots                                                                                         \\
	& \quad \leq \normsmall{\mathbf{B}_u^{(l)}}{\infty} \sum_{h = 0}^{k - 1} \norm{\left( \mathbf{A}^{\!(l)}\right)^{\!k\!-\!1\!-\!h\!}}{\infty} \normsmall{\tilde{\boldsymbol{u}}_{h}^{(l)}}{\infty} \nonumber                                                            \\
	& \quad \leq \normsmall{\mathbf{B}_u^{(l)}}{\infty} \max_{0 \leq h < k} \normsmall{\tilde{\boldsymbol{u}}_{h}^{(l)}}{\infty} \sum_{h = 0}^{k - 1} \norm{\left( \mathbf{A}^{\!(l)}\right)^{\!k\!-\!1\!-\!h\!}}{\infty} \nonumber                                        \\
	& \quad \leq \normsmall{\mathbf{B}_u^{(l)}}{\infty} \max_{0 \leq h < k} \normsmall{\tilde{\boldsymbol{u}}_{h}^{(l)}}{\infty} \sum_{h = 0}^{k - 1} \left(\bar{\sigma}_f^{(l)} + \bar{\sigma}_i^{(l)} \normsmall{R_g^{(l)}}{\infty}\right)^{\!k\!-\!1\!-\!h\!} \nonumber
\end{align}
Notice that the last multiplicative term in the above equation is a geometric series that can be bounded as follows:
\begin{align*}
	\sum_{h = 0}^{k - 1} \left(\bar{\sigma}_f^{(l)} + \bar{\sigma}_i^{(l)} \normsmall{R_g^{(l)}}{\infty}\right)^{\!k\!-\!1\!-\!h\!} & = \frac{1 - \left(\bar{\sigma}_f^{(l)} + \bar{\sigma}_i^{(l)} \normsmall{R_g^{(l)}}{\infty}\right)^k}{1 - \left(\bar{\sigma}_f^{(l)} + \bar{\sigma}_i^{(l)} \normsmall{R_g^{(l)}}{\infty}\right)} \nonumber \\
	\label{eq:geometric_series_inequality}
	& \leq \frac{1}{1 - \left(\bar{\sigma}_f^{(l)} + \bar{\sigma}_i^{(l)} \normsmall{R_g^{(l)}}{\infty}\right)}
\end{align*}
since $0 < \bar{\sigma}_f^{(l)} + \bar{\sigma}_i^{(l)} \normsmall{R_g^{(l)}}{\infty} < 1$ due to \eqref{eq:sigma_tanh_bound} and \eqref{eq:lstm_stability_condition}. By substituting the previous result in \eqref{eq:computations_for_gamma_u}, we obtain the expression for $\gamma_u$:
\begin{equation}
	\label{eq:function_gamma_u_ISS}
	\gamma_u\left( \max_{0 \leq h < k} \normsmall{\tilde{\boldsymbol{u}}_{h}^{(l)}}{\infty} \right) = \frac{\normsmall{\mathbf{B}_u^{(l)}}{\infty} \max_{0 \leq h < k} \normsmall{\tilde{\boldsymbol{u}}_{h}^{(l)}}{\infty}}{1 - \left(\bar{\sigma}_f^{(l)} + \bar{\sigma}_i^{(l)} \normsmall{R_g^{(l)}}{\infty}\right)},
\end{equation}
which is a $\mathcal{K}_\infty$ function \cite{jiang_input--state_2001}. Finally, we can derive $\gamma_b \in \mathcal{K}_\infty$ in a fashion similar to \eqref{eq:function_gamma_u_ISS}, starting from \eqref{eq:state_bound_from_0_gamma_b} instead of \eqref{eq:state_bound_from_0_gamma_u}:
\begin{equation}
	\label{eq:function_gamma_b_ISS}
	\gamma_b \left( \normsmall{\boldsymbol{b}_g^{(l)}}{\infty} \right) = \frac{\normsmall{\mathbf{B}_b^{(l)}}{\infty}\normsmall{\boldsymbol{b}_g^{(l)}}{\infty}}{1 - \left(\bar{\sigma}_f^{(l)} + \bar{\sigma}_i^{(l)} \normsmall{R_g^{(l)}}{\infty}\right)}.
\end{equation}
In conclusion, according to Definition \ref{def:strict_iss}, under Assumption \ref{as:boundedness_of_input} and Assumption \ref{ass:initial_hidden_states}, if the condition in \eqref{eq:lstm_stability_condition} holds, the $l$-th \LSTM{} layer with state updates in \eqref{eq:LSTM_layer_cell_state_update}/\eqref{eq:LSTM_layer_hidden_state_update} is \strictISS{} since there exist $\beta \in \mathcal{KL}$ in \eqref{eq:function_beta_ISS}, $\gamma_u \in \mathcal{K}_\infty$ in \eqref{eq:function_gamma_u_ISS}, and $\gamma_b \in \mathcal{K}_\infty$ in \eqref{eq:function_gamma_b_ISS} such that
\begin{align*}
	\normsmall{\boldsymbol{x}_{k}^{(l)}}{\infty} & \leq \beta(\normsmall{\boldsymbol{x}_0^{(l)}}{\infty}, k) + \gamma_u\left( \max_{0 \leq h < k} \normsmall{\tilde{\boldsymbol{u}}_{h}^{(l)}}{\infty} \right) + \\
	& \quad + \gamma_b \left( \normsmall{\boldsymbol{b}_g^{(l)}}{\infty} \right)
\end{align*}
holds for any $\boldsymbol{x}_0^{(l)} \in \mathbb{R}^{n_{\mathrm{hu}}^{(l)}} \times (-1, 1)^{n_{\mathrm{hu}}^{(l)}}$, any input sequence $\tilde{\mathbf{u}}^{(l)} = \{\tilde{\boldsymbol{u}}_{h}^{(l)} \in \tilde{\mathcal{U}}^{(l)}: h \in \{0, \ldots, k-1\}\}$, any bias $\boldsymbol{b}_g^{(l)} \in \mathbb{R}^{n_{\mathrm{hu}}}$, and any $k \in \mathbb{N}$.
\hfill$\square$

\paragraphfont{Proof of Theorem \ref{th:cascaded_lstm_iss_fc}}
The network consists of $L$ subsystems. As shown by \authornote{Jiang}{jiang_input--state_2001}, a cascade of \strictISS{} systems remains \strictISS{}. Consequently, the complete \LSTM{} network is \strictISS{} if \eqref{eq:lstm_stability_condition} holds $\forall l \in \{1, \ldots, L\}$. 
\hfill$\square$
\end{document}